\documentclass[11pt]{article}
\usepackage{amsmath,amssymb,amsthm,amscd}

\def \N {\mathbb{N}}
\def \Z {\mathbb{Z}}
\def \Q {\mathbb{Q}}

\def \c {\textbf{c}}
\def \J {{\cal J}}
\newtheorem{theorem}{Theorem}[section]
\newtheorem{proposition}[theorem]{Proposition}
\newtheorem{lemma}[theorem]{Lemma}

\def\GL{{\rm GL}}
\def\Aut{{\rm Aut}}
\newcommand{\nd}{\hbox{$\not\vert\,$}}

\parskip 2ex
\parindent 0pt
\begin{document}

\begin{center}
{\large \sc Finitely generated soluble groups and their subgroups}

\vspace{0.5cm}
{ Tara Brough and Derek F. Holt}

\end{center}

We prove that every finitely generated soluble group that is not virtually\\
abelian has a subgroup of one of a small number of types.\\

\section{Introduction}

The aim of this paper is to describe a small number of types of groups, which
have the property that every finitely generated soluble group that is not
virtually abelian has a subgroup of one of these types.

In Section~\ref{twogenmetasec}, we shall define a class of 2-generator
abelian-by-cyclic groups, each of which can be embedded into a semidirect
product $\Q^s\rtimes \Z$ for some ${s > 0}$. We shall call these
{\em Gc-groups} and prove in Theorem~\ref{metabelian} that every finitely
generated metabelian group that is not virtually abelian has a subgroup that
is either a \emph{proper} (that is, non virtually abelian) Gc-group or is isomorphic 
to $\Z \wr \Z$ or to $C_p \wr \Z$ for
some prime $p$, where $C_p$ denotes a cyclic group of order $p$.
Our second main result, Theorem~\ref{tfsol},
is that every finitely generated torsion-free
soluble group that is not virtually abelian has a subgroup that is either
a proper Gc-group or is isomorphic to $\Z^\infty$, which in this paper will denote
a free abelian group of countably infinite rank. Our main result, Theorem
\ref{main}, is a little less satisfactory. We show that
every finitely generated soluble group that is not virtually abelian
has a subgroup $H$ that is either a proper Gc-group or is isomorphic to
$\Z^\infty$, or is finitely generated with an infinite normal torsion
subgroup. We would prefer to be able to further restrict the structure of $H$ in
the third case. 

These results were motivated by the work of the first author on the class of
finitely generated groups whose word problem 
is an intersection of finitely many context-free languages - known as \emph{poly-${\cal CF}$} groups.  
Finitely generated virtually abelian groups are known to all be poly-${\cal CF}$, and it is conjectured 
that these are the only soluble poly-${\cal CF}$ groups.  Since the class of poly-${\cal CF}$ groups
is closed under taking finitely generated subgroups, it would suffice to find a list of finitely generated 
non-poly-${\cal CF}$ groups such that every finitely generated soluble group either has a subgroup 
isomorphic to one on the list or is virtually abelian.
The first author has shown \cite{TB} that $C_p\wr \Z$ for any prime $p$, proper Gc-groups and all groups containing
$\Z^\infty$ are not poly-${\cal CF}$.  Thus Theorems~\ref{metabelian} and~\ref{tfsol} lead to a proof of the
conjecture in the metabelian and torsion-free cases respectively.  It remains to deal with the groups in the
last case of Theorem~\ref{main}.  One approach might be to show that a poly-${\cal CF}$ group cannot have an
infinite torsion subgroup.

In general not much can be said about the structure of finitely generated
soluble groups and their subgroups. For example, B.H. Neumann and
H. Neumann proved in 1959~\cite[Section 4.1.1]{Rob04} that every 
countable soluble group of derived length $d$ may be embedded in a $2$-generator
soluble group of derived length at most $d+2$, and P. Hall proved in
1954~\cite[Section 4.1.3]{Rob04} that, for any non-trivial countable abelian
group $A$, there are $2^{\aleph_0}$ non-isomorphic $2$-generator
centre-by-metabelian groups $G$ such that $A\cong Z(G) = G''$.

We conclude this introductory section by stating some results
that we shall need from the literature.

\begin{proposition}\label{smithform}{\rm \cite[Theorem 9.64]{Rotman}}
Let $\Sigma = \mathrm{diag}(\sigma_1,\ldots,\sigma_k)$ be the
non-singular diagonal block 
in the Smith normal form of a matrix $M$ with entries in a euclidean ring $R$.  
Let $\gamma_0(M) = 1$, and for $i>0$ let $\gamma_i(M)$
be the $\gcd$ of all $i\times i$ minors of $M$.
Then $\sigma_i = \gamma_i(M)/\gamma_{i-1}(M)$ for all $1\leq i\leq k$.
\end{proposition}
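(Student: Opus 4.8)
The plan is to exploit two facts: that the quantities $\gamma_i(M)$ are unchanged when $M$ is multiplied on either side by an invertible matrix over $R$, and that they are easy to compute directly for a diagonal matrix whose entries form a divisibility chain. Since the Smith normal form provides invertible matrices $P$ and $Q$ over $R$ with $M = P D Q$, where $D = \mathrm{diag}(\sigma_1,\ldots,\sigma_k,0,\ldots,0)$ and $\sigma_1 \mid \sigma_2 \mid \cdots \mid \sigma_k$, combining these two facts gives $\gamma_i(M) = \gamma_i(D) = \sigma_1 \sigma_2 \cdots \sigma_i$, and the stated formula $\sigma_i = \gamma_i(M)/\gamma_{i-1}(M)$ follows at once (using $\gamma_0 = 1$). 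I take the existence of the Smith normal form as given, since the statement already refers to it.

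The main step, and the one requiring the most care, is the invariance of $\gamma_i$ under multiplication by invertible matrices. I would establish this using the Cauchy--Binet formula, which expresses each $i \times i$ minor of a product $XY$ as an $R$-linear combination of products of an $i \times i$ minor of $X$ with an $i \times i$ minor of $Y$. Applied to $N = PM$, this shows that every $i \times i$ minor of $N$ is an $R$-linear combination of $i \times i$ minors of $M$, whence $\gamma_i(M) \mid \gamma_i(N)$. Since $P$ is invertible over $R$ we have $M = P^{-1} N$, and the same argument gives $\gamma_i(N) \mid \gamma_i(M)$; hence $\gamma_i(N)$ and $\gamma_i(M)$ are associates. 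The identical argument with right multiplication by $Q$ then yields $\gamma_i(M) = \gamma_i(PMQ)$ for all $i$, up to units. Here the hypothesis that $R$ is euclidean, and in particular a gcd domain in which two-sided divisibility forces associateness, is what makes $\gamma_i$ well defined and the argument close.

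It then remains to compute $\gamma_i(D)$. Because $D$ is diagonal, any $i \times i$ submatrix has nonzero determinant only when its selected rows and columns coincide, in which case the minor is a product $\sigma_{j_1}\cdots\sigma_{j_i}$ with $j_1 < \cdots < j_i$. The divisibility chain gives $\sigma_t \mid \sigma_{j_t}$ for each $t$ (since $j_t \geq t$), so $\sigma_1\cdots\sigma_i$ divides every such product; as it is itself one of these products (taking $j_t = t$), it is their gcd. Thus $\gamma_i(D) = \sigma_1\cdots\sigma_i$, completing the proof. The only delicate point throughout is keeping the bookkeeping of row and column index sets under Cauchy--Binet correct; everything else is a direct gcd computation.
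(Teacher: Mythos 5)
Your proof is correct, but note that the paper does not prove this proposition at all: it is quoted directly from the literature (Theorem 9.64 of Rotman's \emph{Advanced Modern Algebra}), so there is no internal proof to compare against. Your argument --- invariance of the determinantal divisors $\gamma_i$ under left and right multiplication by invertible matrices (via Cauchy--Binet, or equivalently via the fact that rows of $PM$ are $R$-linear combinations of rows of $M$), followed by the direct computation $\gamma_i(D)=\sigma_1\cdots\sigma_i$ for the diagonal form using the divisibility chain --- is precisely the standard textbook proof, essentially the one given in the cited reference (which phrases the invariance step in terms of elementary row and column operations rather than general invertible factors, a cosmetic difference). The one point to keep explicit is that in a general euclidean ring the $\gcd$ is defined only up to units, so the conclusion $\sigma_i = \gamma_i(M)/\gamma_{i-1}(M)$ holds up to associates; you acknowledge this, and over $\Z$ (the only case the paper uses, in Lemma 2.3) it disappears by normalising everything to be non-negative.
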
 

\begin{proposition}\label{G/N}
{\rm (Philip Hall: for a proof, see~\cite[14.1.3]{Rob82}.)}
Let $G$ be a finitely generated group, let $N\lhd G$ and suppose that $G/N$ is finitely presented.  Then $N$ is the normal closure in $G$ of a finite subset of $G$.
\end{proposition}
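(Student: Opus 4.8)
The plan is to pull the problem back to a free group and then reduce it to the standard fact that finite presentability does not depend on the chosen finite generating set. Since $G$ is finitely generated, fix a finite generating set and let $\pi\colon F\to G$ be the surjection from the free group $F$ on that set, with kernel $R$. Put $S=\pi^{-1}(N)$, so that $R\le S\lhd F$ and $F/S\cong G/N$. As $\pi$ is surjective, the normal closure in $G$ of a set $\pi(W)$ equals the image under $\pi$ of the normal closure in $F$ of $W$; hence $N=\pi(S)$ is the normal closure in $G$ of $\pi(W)$ whenever $W$ normally generates $S$ in $F$. Thus it suffices to prove that $S$ is the normal closure in $F$ of a \emph{finite} subset.

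This is exactly the assertion that, because $F/S\cong G/N$ is finitely presented, it admits a finite presentation on the given (finite) generating set. I would prove this comparison lemma as follows. Write $Q=G/N$ and fix a finite presentation $Q\cong F_Y/\langle\langle T\rangle\rangle^{F_Y}$, where $F_Y$ is free on a finite set $Y$ and $T$ is finite; let $\psi\colon F_Y\to Q$ be the quotient map and let $\bar\pi\colon F\to Q$ be the composite $F\to G\to Q$, so that $\ker\bar\pi=S$. Since both maps are onto, choose for each free generator $x$ of $F$ a word $u_x\in F_Y$ with $\psi(u_x)=\bar\pi(x)$, and for each $y\in Y$ a word $v_y\in F$ with $\bar\pi(v_y)=\psi(y)$; these choices define homomorphisms $\phi\colon F\to F_Y$ and $\theta\colon F_Y\to F$ satisfying $\psi\phi=\bar\pi$ and $\bar\pi\theta=\psi$.

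The candidate finite normal generating set for $S$ is
\[
W=\{\,x^{-1}\theta(\phi(x)) : x\ \text{a free generator of}\ F\,\}\ \cup\ \{\,\theta(t): t\in T\,\}.
\]
Each listed element lies in $S$: applying $\bar\pi$ and using $\bar\pi\theta=\psi$ and $\psi\phi=\bar\pi$ gives $\bar\pi(x^{-1}\theta(\phi(x)))=1$, while $\bar\pi(\theta(t))=\psi(t)=1$. Hence $K:=\langle\langle W\rangle\rangle^{F}\le S$. For the reverse inclusion I would pass to $F/K$ and show that the natural surjection $F/K\to F/S\cong Q$ is an isomorphism. Composing $\theta$ with $F\to F/K$ kills each $t$ (because $\theta(t)\in K$), so it factors through $Q$ to give $\bar\theta\colon Q\to F/K$; the relations $x^{-1}\theta(\phi(x))\in K$ show that each $xK$ lies in the image of $\bar\theta$, so $\bar\theta$ is onto; and composing $Q\xrightarrow{\bar\theta}F/K\to Q$ returns the identity on $Q$ (track a lift $y\in F_Y$ of $q\in Q$: it maps to $\theta(y)K$ and then to $\bar\pi(\theta(y))=\psi(y)=q$). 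Therefore $\bar\theta$ is an isomorphism, the projection $F/K\to Q$ is injective, and $K=S$, as required.

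The two short $\bar\pi$-computations are routine; the one step that needs genuine care is establishing $S\subseteq K$, that is, that no relations beyond $W$ are needed. This is the reason I would organise it as the ``retraction'' argument showing $F/K\to Q$ has a two-sided inverse, rather than attempting to rewrite arbitrary elements of $S$ as products of conjugates of the chosen words directly.
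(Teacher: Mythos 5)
Your proof is correct. One point to note at the outset: the paper does not prove this proposition at all --- it is quoted as a known result of Philip Hall, with the proof deferred to Robinson's book (the citation [14.1.3] in the statement). So there is no internal argument to compare against; what you have done is supply the missing proof, and your argument is essentially the standard one found in the cited source. The reduction step (pull $N$ back to $S=\pi^{-1}(N)$ in a free group $F$ of finite rank, and use surjectivity of $\pi$ to transport a finite normal generating set of $S$ down to one for $N$) is sound, and the comparison lemma you then prove --- that if $F/S$ is finitely presented then $S$ is the normal closure of a finite subset of $F$ --- is exactly B.~H.~Neumann's observation that finite presentability does not depend on the choice of finite generating set. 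Your ``retraction'' argument is complete: the inclusion $K\le S$ follows from the two $\bar\pi$-computations; the induced map $\bar\theta\colon Q\to F/K$ is well defined because the kernel of $F_Y\to F/K$ is normal and contains $T$, hence contains $\langle\langle T\rangle\rangle^{F_Y}$; surjectivity of $\bar\theta$ comes from the relators $x^{-1}\theta(\phi(x))$; and the identity $p\circ\bar\theta=\mathrm{id}_Q$ (where $p\colon F/K\to Q$ is the projection) together with surjectivity of $\bar\theta$ forces $p$ to be injective, so $S/K$ is trivial and $K=S$. No step is missing, and the place you flagged as needing care (the inclusion $S\subseteq K$) is indeed handled correctly by the retraction rather than by an attempted direct rewriting.
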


\begin{proposition}\label{schur}
{\rm (Schur: for a proof, see \cite[10.1.4]{Rob82}.)}
Let $G$ be a group with $|G : Z(G)|$ finite.  Then $G'$ is finite.
\end{proposition}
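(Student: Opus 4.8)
The plan is to establish three things and then combine them: that $G$ possesses only finitely many commutators, that every element of $G'$ has order dividing $n := |G : Z(G)|$, and that a subgroup generated by a finite conjugation-closed set of torsion elements is finite.

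First I would fix a transversal $t_1, \dots, t_n$ to $Z := Z(G)$ in $G$ and note that a commutator depends only on the cosets of its entries: if $a = t_i u$ and $b = t_j v$ with $u, v \in Z$, then centrality of $u$ and $v$ gives $[a,b] = [t_i, t_j]$. Hence the set $K := \{\, [a,b] : a, b \in G \,\} = \{\, [t_i, t_j] : 1 \le i, j \le n \,\}$ has at most $n^2$ elements. Since $[a,b]^g = [a^g, b^g]$, the set $K$ is closed under conjugation, so it is a finite normal subset of $G$ with $G' = \langle K \rangle$.

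Next I would show that the $n$-th power map $g \mapsto g^n$ is in fact a homomorphism from $G$ into $Z$ --- namely, the transfer of $G$ into its centre. Letting $g$ act by left multiplication on the cosets $t_i Z$ yields a permutation $\sigma$ with $g t_i = t_{\sigma(i)} z_i$ for uniquely determined $z_i \in Z$; multiplying around each cycle of $\sigma$ and using centrality shows that $\prod_i z_i = g^n$, so in particular $g^n \in Z$. The transfer is always a homomorphism, and here it maps into the abelian group $Z$, so it vanishes on $G'$; thus $x^n = 1$ for every $x \in G'$. In particular every element of $K$ has finite order dividing $n$.

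Finally I would apply the fact (reprovable by a short collecting argument) that a finite normal set of elements of finite order generates a finite subgroup: any element of $G' = \langle K \rangle$ is a word in $K$, normality lets one bring all occurrences of each fixed element of $K$ together, and the bound on the orders then bounds each exponent, leaving at most $\prod_{k \in K} |k|$ possibilities. Hence $G'$ is finite. I expect the middle step to be the crux: the power map $g \mapsto g^n$ is a homomorphism for no elementary reason, but only because it coincides with the transfer into the centre, and verifying this coincidence is the one genuinely non-formal computation; the remaining steps are routine bookkeeping.
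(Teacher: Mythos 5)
The paper itself gives no proof of this proposition --- it is quoted from the literature, with a citation to \cite[10.1.4]{Rob82} --- and your first two steps are exactly the classical argument found there: commutators depend only on cosets modulo the centre, so there are at most $n^2$ of them and $G'$ is generated by a finite conjugation-closed set $K$; and the transfer of $G$ into $Z(G)$ is the $n$-th power map, which, being a homomorphism into an abelian group, kills $G'$, so $G'$ has exponent dividing $n$. Both steps are correct as you state them. You are also right that a third ingredient is genuinely needed here: a finitely generated group of bounded exponent need not be finite (free Burnside groups), so the normality of $K$, or something equivalent, must actually be used.

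Where you part company with the cited proof is the finishing step, and this is also where your write-up has its one weak point. Invoking Dietzmann's lemma (a finite normal set of torsion elements generates a finite subgroup) is legitimate --- it is a true, standard result --- but the collecting argument you sketch for it is the naive one and does not work as stated. Bringing all occurrences of $k_1$ to the front is fine, since conjugation by $k_1$ fixes $k_1$ and permutes $K$; but when you then collect $k_2$ in the remaining tail, the letters it passes are conjugated by $k_2$, and such a conjugate can equal $k_1$. Occurrences of $k_1$ therefore reappear behind the $k_2$-block, and the claimed normal form $k_1^{a_1}\cdots k_r^{a_r}$ with the bound $\prod_{k\in K}|k|$ is not reached by this process. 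The standard repair bounds word length instead of seeking a normal form: if a product of elements of $K$ has length greater than $\sum_{k\in K}(|k|-1)$, some $k$ occurs at least $|k|$ times; moving just those occurrences to the front (their number is preserved, since conjugation by $k$ fixes $k$) lets you cancel $k^{|k|}$ and shorten the word, so every element of $\langle K\rangle$ is a product of boundedly many elements of $K$, whence $\langle K\rangle$ is finite. Alternatively --- and this is how the source cited by the paper finishes --- you can avoid Dietzmann altogether: by your first step $G'$ is finitely generated, so its finite-index subgroup $G'\cap Z(G)$ is also finitely generated; it is abelian and of exponent dividing $n$ by your second step, hence finite, and then $|G'|\le n\,|G'\cap Z(G)|$ is finite. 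Either patch makes your argument complete.
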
 

\begin{proposition}\label{GLbound}
{\rm (Minkowski:  for a proof, see~\cite{RocTan}.)}
There exists a function $L: \N\rightarrow \N$ such that the order of any finite
subgroup of $\GL(n,\Z)$ divides $L(n)$.
\end{proposition}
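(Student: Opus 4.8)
The plan is to use reduction modulo a prime. For a prime $p$, let $\pi_p\colon \GL(n,\Z)\to \GL(n,\Z/p\Z)$ be the homomorphism induced by reducing each matrix entry modulo $p$, and let $\Gamma(p)=\ker\pi_p$ be the corresponding principal congruence subgroup, consisting of those integer matrices congruent to the identity $I$ modulo $p$. Since $\GL(n,\Z/p\Z)$ is finite, it suffices to show that for a suitable fixed prime $p$ the restriction of $\pi_p$ to any finite subgroup is injective; then every finite subgroup of $\GL(n,\Z)$ embeds in $\GL(n,\Z/p\Z)$, so its order divides $|\GL(n,\Z/p\Z)|=\prod_{i=0}^{n-1}(p^n-p^i)$, and we may take $L(n)$ to be this quantity for our chosen $p$.

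The heart of the argument is the following claim: for any odd prime $p$, the group $\Gamma(p)$ is torsion-free. Granting this, take $p=3$. If $G$ is a finite subgroup of $\GL(n,\Z)$, then $G\cap\Gamma(3)$ is a finite subgroup of the torsion-free group $\Gamma(3)$, hence trivial, so $\pi_3$ is injective on $G$ and the conclusion follows as above.

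To prove the claim, suppose for contradiction that $\Gamma(p)$ contains a non-identity element of finite order; replacing it by a suitable power, we may assume it has prime order $\ell$, and write it as $M=I+p^kN$, where $k\geq 1$ and $N$ is an integer matrix with $N\not\equiv 0\pmod p$ (so that $p^k$ is the exact power of $p$ dividing $M-I$). Expanding $M^\ell=I$ by the binomial theorem gives
\[
0 = M^\ell - I = \ell p^k N + \sum_{j=2}^{\ell} \binom{\ell}{j} p^{jk} N^j.
\]
I would then compare $p$-adic valuations term by term. If $\ell\neq p$, dividing through by $p^k$ and reducing modulo $p$ yields $\ell N\equiv 0\pmod p$, whence $N\equiv 0\pmod p$ since $\ell$ is invertible modulo $p$ --- a contradiction. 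If $\ell=p$, the first term is $p^{k+1}N$, while every remaining term $\binom{p}{j}p^{jk}N^j$ with $2\leq j\leq p$ is divisible by $p^{k+2}$: for $2\leq j\leq p-1$ this uses $p\mid\binom{p}{j}$ together with $jk\geq 2k\geq k+1$, while for $j=p$ it uses $pk\geq 3k\geq k+2$. Dividing by $p^{k+1}$ and reducing modulo $p$ once more forces $N\equiv 0\pmod p$, again a contradiction.

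The main obstacle is the case $\ell=p$ of the torsion-freeness claim, where a naive valuation count is too coarse and one must exploit the divisibility of the binomial coefficients $\binom{p}{j}$ by $p$; this is precisely where the hypothesis that $p$ is odd enters, since it guarantees $v_p\big(\binom{p}{2}p^{2k}\big)=2k+1\geq k+2$. The hypothesis cannot be dropped: for $p=2$ the group $\Gamma(2)$ genuinely contains the involution $-I$, so no analogous bound holds with $p=2$, which is why I would fix $p=3$ throughout.
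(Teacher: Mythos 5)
Your proposal is correct, but there is no internal proof to compare it with: the paper quotes this proposition as a theorem of Minkowski and defers the proof entirely to the cited paper of Rockmore and Tan. What you give is a complete, self-contained rendering of the classical argument: reduce modulo a prime and prove the key lemma that the principal congruence subgroup $\Gamma(p)=\ker(\GL(n,\Z)\to\GL(n,\Z/p\Z))$ is torsion-free for odd $p$. Your bookkeeping checks out. Writing a putative torsion element of prime order $\ell$ in $\Gamma(p)$ as $M=I+p^kN$ with $p\nmid N$ is legitimate ($M\neq I$ makes the exact power $k\ge 1$ well defined, and $N$ commutes with $I$, so the binomial expansion is valid); when $\ell\neq p$ you eliminate $N$ modulo $p$ at valuation $k$, and when $\ell=p$ at valuation $k+1$, using $p\mid\binom{p}{j}$ together with $1+jk\ge k+2$ for $2\le j\le p-1$, and $pk\ge 3k\ge k+2$ for $j=p$; the last inequality is exactly where oddness of $p$ enters, and your example $-I\in\Gamma(2)$ shows correctly that the hypothesis cannot be dropped. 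With $p=3$, every finite subgroup of $\GL(n,\Z)$ embeds in $\GL(n,\Z/3\Z)$, so by Lagrange its order divides $L(n):=\prod_{i=0}^{n-1}(3^n-3^i)$, which is the divisibility statement asked for. This $L(n)$ is much cruder than the sharp bound that the cited reference is concerned with, but that costs nothing here: the paper only ever uses the existence of some uniform bound (to stabilise the chain of quotients $H_i/A_i$ in Proposition~\ref{countgen}, and to bound the indices ${|U_i:C_i|}$ in the proof of Theorem~\ref{main}), so your elementary argument would serve as a drop-in replacement for the citation.
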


\section{Two-generator metabelian groups}\label{twogenmetasec}

The proof our main result on metabelian groups will rely on some facts about a certain type of two-generator metabelian group.  
These are groups $H$ satisfying the following hypothesis.

\textbf{Hypothesis $(*)$}:  $H = \langle a,b\rangle$, where $|a|$ is infinite,
the subgroup $B = \left\langle b^{a^i} \mid i\in \Z\right\rangle$ is abelian, and $H$ is not virtually abelian.

We observe that $(*)$ implies that $H = B\rtimes \langle a\rangle$.

Throughout our discussion of metabelian groups, $H$ and $B$ will have these definitions, 
and for convenience we will use the notation $b_i$ for $b^{a^i}$.  
We will also sometimes use additive notation within $B$.

We will first consider two types of groups satisfying Hypothesis $(*)$: 
(i) $H$ is torsion-free;
(ii) $b$ is a torsion element;
before going on to the general case.

\subsection{Case 1:  $H$ is torsion-free}

Suppose that $H = \left\langle a,b\right\rangle$ is torsion-free and satisfies
Hypothesis $(*)$.  If $H\not\cong \Z \wr \Z$, then $B$ is not free abelian on
$\{\,b_i \mid i\in \Z\,\}$, so there exist integers $r, s, c_i$ with $r \le s$ and
$c_r,c_s \ne 0$ such that $\sum_{i=r}^s c_i b_i = 0.$ Since we are assuming that $H$ is torsion-free and not virtually abelian, we must have $r<s$, and
we may assume that $\gcd(c_0,\ldots,c_s) = 1$.
Furthermore, by conjugating by $a^{-r}$, we may take $r = 0$.

We will eventually show that this relation $\sum_{i=0}^s c_ib_i = 0$ 
(or in multiplicative notation $b_0^{c_0} b_1^{c_1}\cdots b_s^{c_s} = 1$), 
together with the fact that $B$ is abelian, fully determines $H$.  
To do this, we first consider a certain type of group presentation,
and then show that $H$ has a presentation of this type.

\subsubsection{The groups $G(\c)$}

For $\c = (c_0,\ldots,c_s)\in \Z^{s+1}$ with $s \ge 1$, $c_0,c_s\neq 0$ and
$\gcd(c_0,\ldots,c_s) = 1$,
let $G(\c)$ be the group defined by the presentation
$\langle \alpha, \beta \mid {\cal R}_{\c}\rangle$, where
\[{\cal R}_{\c} = \left\{[\beta, \beta^{\alpha^i}] \; (i\in \Z), \; \beta^{c_0}(\beta^{\alpha})^{c_1}\cdots(\beta^{\alpha^s})^{c_s}\right\}.\]
We shall call such groups {\em Gc-groups}, and when we refer to the
Gc-group $G(\c)=\langle x,y\rangle$, we will assume that
$\c \in \Z^{s+1}$ satisfies the above conditions, and that
$x$ replaces $\alpha$ and $y$ replaces $\beta$ in the above definition of
$G(\c)$.

Note that the Gc-groups include some polycyclic groups (when
$|c_0|=|c_s|=1$, for example), some of which are even virtually abelian. But
they also include other types of groups, such as the soluble
Baumslag-Solitar groups $\langle x,y \mid y^{-1}xy=x^k \rangle$ with $|k|>1$.

If $\left\langle X \mid R\right\rangle$ is a group presentation, denote its
abelianisation by $\mathrm{Ab}\left\langle X \mid R\right\rangle$.
We call such a presentation an \emph{abelian presentation}, and  
we often use additive notation for its relators.

\begin{lemma}\label{J}
Let $G = G(\c) = \langle \alpha,\beta \rangle$ be a Gc-group,
and let $J = \left\langle \beta^{\alpha^i} \mid i\in \Z \right\rangle$.  
Then $G=J\rtimes \left\langle \alpha \right\rangle$,
\[ J \cong \J := \mathrm{Ab}\left\langle \beta_i\; (i\in \Z) \mid \sum_{i=0}^s c_i\beta_{k+i} \; (k\in \Z) \right\rangle, \]
and $G$ is torsion-free.
\end{lemma}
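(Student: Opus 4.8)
The plan is to identify $G$ explicitly with the semidirect product $\J\rtimes\langle t\rangle$, where $t$ acts on the abelian group $\J$ by the index-shift $\beta_i\mapsto\beta_{i-1}$, and then read off all three conclusions from that identification. First I would record the easy structural facts inside $G$. Conjugating the relators $[\beta,\beta^{\alpha^i}]=1$ by powers of $\alpha$ gives $[\beta^{\alpha^j},\beta^{\alpha^k}]=1$ for all $j,k$, so $J$ is abelian; and $J$ is normal because conjugation by $\alpha$ permutes its generators, while conjugation by $\beta$ fixes them ($\beta\in J$ and $J$ is abelian). Since $G=\langle\alpha,\beta\rangle$ and $\beta\in J$, we have $G=J\langle\alpha\rangle$ with $G/J$ cyclic. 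Because $J$ is abelian and the relator $\beta^{c_0}(\beta^{\alpha})^{c_1}\cdots(\beta^{\alpha^s})^{c_s}$ together with its $\alpha$-conjugates translates into exactly the defining relators $\sum_{i=0}^s c_i\beta_{k+i}$ of $\J$, the assignment $\beta_i\mapsto\beta^{\alpha^i}$ extends to a surjective homomorphism $\psi:\J\to J$.

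The crux is to show $\psi$ is injective, and the key device is a homomorphism in the opposite direction. The shift $\beta_i\mapsto\beta_{i-1}$ permutes the defining relators of $\J$ among themselves and so is a well-defined automorphism $\tau$ of $\J$ (invertible, since its inverse is the opposite shift); I form $\tilde G=\J\rtimes\langle t\rangle$ with $\langle t\rangle\cong\Z$ acting as $\tau$. Using the presentation $\langle\alpha,\beta\mid\mathcal{R}_{\c}\rangle$ together with von Dyck's theorem, I would check that $\alpha\mapsto t$, $\beta\mapsto\beta_0$ respects every relator (the commutators die because $\J$ is abelian, and the long relator maps to $\sum_{i=0}^s c_i\beta_i=0$), yielding a homomorphism $\phi:G\to\tilde G$ with $\phi(\beta^{\alpha^i})=\beta_i$. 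Then $\phi\circ\psi$ is the identity on the generators $\beta_i$ of $\J$, so $\psi$ is injective and hence $J\cong\J$. The same $\phi$ delivers the splitting: $\phi(\alpha)=t$ has infinite order, so $\alpha$ does too, and if $\alpha^n\in J$ then $t^n=\phi(\alpha^n)\in\J\cap\langle t\rangle=1$ forces $n=0$; thus $J\cap\langle\alpha\rangle=1$ and $G=J\rtimes\langle\alpha\rangle$. In fact $\phi$ is now seen to be an isomorphism $G\cong\tilde G$.

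Finally, torsion-freeness of $G$ follows from that of $\tilde G$. Viewing $\J$ as the cyclic $\Z[t,t^{-1}]$-module $\Z[t,t^{-1}]/(f)$ with $f=\sum_{i=0}^s c_it^i$, I would show $\J$ is torsion-free as an abelian group: if $ng\in(f)$ for some integer $n>0$ and $g\in\Z[t,t^{-1}]$, then $f\mid g$ in $\Q[t,t^{-1}]$, and since $\gcd(c_0,\ldots,c_s)=1$ makes $f$ primitive, Gauss's lemma upgrades this to $f\mid g$ in $\Z[t,t^{-1}]$, so $g\in(f)$. As any element of $\tilde G$ with nontrivial $\langle t\rangle$-component has infinite order (project onto $\langle t\rangle$) and $\J$ is torsion-free, $\tilde G\cong G$ is torsion-free. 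The one step requiring genuine care is the injectivity of $\psi$, i.e.\ ruling out any further collapse of $\J$ inside $G$; everything there rests on producing the reverse map $\phi$ out of the semidirect product $\J\rtimes\langle t\rangle$.
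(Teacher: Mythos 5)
Your proof is correct, and it is worth comparing with the paper's because its second half takes a genuinely different route. On the isomorphism $G\cong\J\rtimes\Z$, the two arguments are the same idea in different packaging: the paper writes down the standard presentation of $\J\rtimes\langle\alpha\rangle$ and collapses it onto $\langle\alpha,\beta\mid{\cal R}_{\c}\rangle$ by Tietze transformations (eliminating the $\beta_i$, $i\neq 0$), while you produce the two homomorphisms $\psi\colon\J\to J$ and $\phi\colon G\to\tilde G$ and use $\phi|_J\circ\psi=\mathrm{id}_{\J}$ as a retraction; your packaging has the minor merit of actually proving the splitting $G=J\rtimes\langle\alpha\rangle$, which the paper only asserts can be seen ``from the presentation''. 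The real divergence is the proof that $G$ is torsion-free. The paper restricts to the subgroups $J_I$ supported on finite intervals, derives finite presentations for them, and shows (Lemma~\ref{smithc}, which rests on Proposition~\ref{smithform} and a gcd-of-minors computation) that the presentation matrix $M(\c,m)$ has Smith normal form $\left(I_m\mid 0_{m,s}\right)$, so each $J_I$ is free abelian. You instead identify $\J$ with the cyclic module $\Z[t,t^{-1}]/(f)$ with $f=\sum_{i=0}^s c_it^i$ primitive, and deduce torsion-freeness from Gauss's lemma. Your route is shorter and more conceptual: it makes Lemma~\ref{smithc} and Proposition~\ref{smithform} unnecessary for this lemma, and it exposes exactly the module structure the paper only exploits later --- Proposition~\ref{embed} becomes immediate from $\Z[t,t^{-1}]/(f)\hookrightarrow\Q[t,t^{-1}]/(f)\cong\Q^s$, on which $t$ acts by precisely the companion matrix written there. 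What the paper's computation buys is elementary self-containedness (integer matrices and gcds, with no appeal to factoriality of $\Z[t]$) together with an explicit free basis for the finitely generated pieces. Two details to pin down when writing yours up: conjugation by $t$ must induce the up-shift $\beta_i\mapsto\beta_{i+1}$, as your formula $\phi(\beta^{\alpha^i})=\beta_i$ requires, so with the convention that the $\langle t\rangle$-action is by conjugation your $\tau\colon\beta_i\mapsto\beta_{i-1}$ is the inverse of the automorphism you want (with the wrong shift the long relator would map to $\sum_i c_i\beta_{-i}$, which is not in general a relator of $\J$); and Gauss's lemma should be invoked after clearing powers of $t$, since content arguments live in $\Z[t]$ and the units of $\Z[t,t^{-1}]$ are $\pm t^k$.
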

\begin{proof}
We see from the presentation that
$G = J\rtimes \left\langle \alpha \right\rangle$.
The automorphism $\psi$ of $\J$ given by $\psi(\beta_i) = \beta_{i+1}$ 
corresponds to the automorphism of $J$ given by conjugation by $\alpha$.  We thus have
\[ \J\rtimes \left\langle \alpha \right\rangle \cong \left\langle \beta_i\; (i\in \Z), \alpha \mid [\beta_0, \beta_i], \beta_i^{\alpha} = \beta_{i+1}\; (i\in \Z), \; \beta_0^{c_0}\cdots\beta_{s}^{c_{s}}\right\rangle, \]
since all the other relators of $\J$ are consequences of those listed.
By eliminating the generators $\beta_i$ for $i\neq 0$ using
$\beta_i = \beta_0^{\alpha^i}$
and replacing $\beta_0$ by $\beta$, we arrive at a presentation for $G$,
and so $G \cong \J \rtimes \left\langle \alpha \right\rangle$, with $J\cong \J$.


It remains to show that $G$ is torsion-free, for which it suffices
to show that $J$ is torsion-free. For this, it is enough to show that any
subgroup of $J$ of the form
$J_I:= \left\langle \beta_i \mid i \in I \right\rangle$ is torsion-free, where
$I = \{k,k+1,\ldots,k'\}$.

To show this, we first derive a presentation for $J_I$.
Set $r_k = \sum_{i=0}^s c_i\beta_{k+i}$ for $k\in \Z$.
Suppose that $r: = \sum_{i=k}^{k'} \delta_i\beta_i$ with $\delta_i \in \Z$ is a relator of $J_I$.
Since $r$ is also a relator of $J$, it must be equal
in the free abelian group on the $\beta_i$ to an element of the form
$\sum_{i=l}^{l'} \gamma_i r_i = \sum_{i=l}^{l'}\gamma_i \sum_{j=0}^s c_j\beta_{j+i}$, where $\gamma_l, \gamma_{l'}\neq 0$.  
So we must have $k\leq l$ and $l'+s\leq k'$.  
Conversely, any $r_j$ with $k\leq j\leq k'-s$ is a relator of $J_I$.
Hence $ J_I \cong \mathrm{Ab}\left\langle \beta_i\; (i\in I) \mid r_j\; (k\leq j\leq k'-s) \right\rangle.$

Let $m= k'-s-k$.  Since $J_I$ is a finitely generated abelian group, 
we can find its isomorphism type by computing the Smith normal form of its presentation matrix, 
which is a matrix with $m$ rows and $m+s$ columns of the following form, where all unspecified entries are 0:
\[ M(\c,m) := \left( \begin{array}{cccccccc} 
c_0&c_1 &\ldots& c_s  &      &      &       &\\
   &c_0 &c_1  &\ldots &c_s   &      &       &\\
   &    &.    &\ldots &\ldots&.     &       &\\
   &    &     &.      &\ldots&\ldots&.      &\\
   &    &     &       &c_0   &c_1   &\ldots &c_s\\
\end{array}\right). \]

We will show in Lemma \ref{smithc} below that the Smith normal form of this matrix is
$\left( I_m \mid 0_{m,s} \right)$, and so $J_I$ is a free abelian
group, and hence $J_I$, $J$ and $G$ are torsion-free as claimed.
\end{proof}

\begin{lemma}\label{smithc} Let $\c = (c_0,c_1,\ldots,c_s)\in \Z^{s+1}$ with
$c_0, c_s\neq 0$ and $\gcd(c_0,\ldots,c_s) = 1$.  
Let $m \ge 1$, and let $M$ be the $m \times (m+s)$ matrix $M(\c,m)$ as defined
above.
Then the Smith normal form of $M$ is $\left( I_m \mid 0_{m,s} \right)$.
\end{lemma}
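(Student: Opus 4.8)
The plan is to use the minor characterisation of the Smith normal form given in Proposition~\ref{smithform}. Writing $\gamma_i = \gamma_i(M)$ for the gcd of the $i \times i$ minors of $M$, it suffices to prove that $\gamma_m = 1$. Indeed, since any $i \times i$ minor expands as an integer combination of $(i-1) \times (i-1)$ minors, we have $\gamma_{i-1} \mid \gamma_i$ for every $i$, so the divisibility chain $\gamma_1 \mid \gamma_2 \mid \cdots \mid \gamma_m$ together with $\gamma_m = 1$ forces $\gamma_i = 1$ for all $1 \le i \le m$. As $\gamma_m = 1 \ne 0$ there is a nonzero $m \times m$ minor, so $M$ has rank $m$; hence the nonsingular diagonal block of Proposition~\ref{smithform} has exactly $m$ entries, each equal to $\sigma_i = \gamma_i/\gamma_{i-1} = 1$, and the whole Smith normal form is $\left( I_m \mid 0_{m,s} \right)$ as claimed.

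To show $\gamma_m = 1$ I would argue prime by prime: it is enough to show that for every prime $p$ there is an $m \times m$ minor of $M$ not divisible by $p$, equivalently that the reduction $\bar M = M \bmod p$ has full row rank $m$ over the field $\mathbb{F}_p$ (and, to see that $\gamma_m \ne 0$, that $M$ has rank $m$ over $\Q$). I would encode a putative row dependence by polynomials: set $c(t) = c_0 + c_1 t + \cdots + c_s t^s$ and, for scalars $\lambda_1,\ldots,\lambda_m$ in the field, put $\lambda(t) = \lambda_1 t + \cdots + \lambda_m t^m$. Since the $(k,c)$ entry of $M$ is $c_{c-k}$ (read as $0$ outside the range $0 \le c-k \le s$), the entry in column $c$ of $\sum_k \lambda_k (\text{row } k)$ is precisely the coefficient of $t^c$ in the product $\lambda(t) c(t)$. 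Thus the rows of $\bar M$ are linearly dependent over the field exactly when $\lambda(t) c(t) = 0$ for some nonzero $\lambda(t)$.

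The key point is then that $c(t) \ne 0$ in $\mathbb{F}_p[t]$: this is exactly where the hypothesis $\gcd(c_0,\ldots,c_s) = 1$ is used, since it guarantees that the $c_j$ do not all vanish mod $p$. Because $\mathbb{F}_p[t]$ is an integral domain, $\lambda(t) c(t) = 0$ with $c(t) \ne 0$ forces $\lambda(t) = 0$, so the rows of $\bar M$ are independent and $\bar M$ has rank $m$. The identical argument over $\Q$ (where $c(t) \ne 0$ because $c_0 \ne 0$) gives rank $m$ over $\Q$ as well. Hence no prime divides $\gamma_m$ and $\gamma_m \ne 0$, so $\gamma_m = 1$.

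I expect the only genuine obstacle to be the bookkeeping in the convolution identity, namely correctly matching the banded entries $c_{c-k}$ of $M$ with the coefficients of $\lambda(t) c(t)$ and checking that the relevant columns $c = 1,\ldots,m+s$ capture all nonzero coefficients of that product. Once this is verified, the integral-domain step and the passage from $\gamma_m = 1$ back to the full Smith normal form via Proposition~\ref{smithform} are routine.
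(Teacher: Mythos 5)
Your proof is correct, and it reaches the crucial fact $\gamma_m(M)=1$ by a genuinely different route from the paper's. The paper argues with explicit minors: for each $i$ with $0\le i\le s$ it takes the submatrix $M^{(i)}$ formed by the $m$ consecutive columns whose top entry is $c_i$, observes that $M^{(i)}$ has $c_i$ all along its diagonal with every entry below the diagonal equal to $0$ or to some $c_j$ with $j<i$, so that $\det M^{(i)} = c_i^m + \sum_{j<i} a_{ij}c_j$ for integers $a_{ij}$, and then shows by induction on $i$ that every prime dividing $\gcd\bigl(\det M^{(0)},\ldots,\det M^{(i)}\bigr)$ divides $\gcd(c_0,\ldots,c_i)$; coprimality of the $c_j$ then makes these $s+1$ particular minors coprime. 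You instead fix an arbitrary prime $p$ and exploit the Toeplitz structure: a row dependence of $M$ modulo $p$ is exactly a polynomial identity $\lambda(t)c(t)=0$ in $\mathbb{F}_p[t]$, and since $\gcd(c_0,\ldots,c_s)=1$ guarantees $c(t)\neq 0$ modulo $p$, the integral-domain property of $\mathbb{F}_p[t]$ forces $\lambda=0$, so $M$ has full row rank mod $p$ and $p$ does not divide $\gamma_m(M)$. Your convolution bookkeeping checks out: the $(k,j)$ entry of $M$ is $c_{j-k}$, the $j$-th entry of $\sum_k \lambda_k(\hbox{row }k)$ is the coefficient of $t^j$ in $\lambda(t)c(t)$, and that product has all its nonzero coefficients in degrees $1,\ldots,m+s$, i.e.\ within the columns of $M$. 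Your separate rank argument over $\Q$ correctly rules out $\gamma_m(M)=0$, a point the paper gets for free from $\det M^{(0)}=c_0^m\neq 0$. Both proofs then finish identically via Proposition~\ref{smithform} and the divisibility chain among the $\gamma_i(M)$. As for what each buys: the paper's argument is elementary and exhibits a concrete coprime family of minors, at the cost of some determinant bookkeeping; yours is more conceptual, identifying $M$ as a truncated matrix of multiplication by $c(t)$, injective over every field in which $c(t)$ survives, and it transfers essentially verbatim to other Euclidean rings once ``$\gcd = 1$'' is read as ``the content of $c$ is a unit''.
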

\begin{proof}
We show first that the $m \times m$ minors of $M$ are relatively prime.
Label the columns of $M$ by $C_0, C_1, \ldots, C_{s+m-1}$, and  for $i\notin \{0,\ldots,s\}$ set $c_i = 0$.  
Then $C_i = (c_i, c_{i-1}, \ldots, c_{i-m+1})^{\mathrm{T}}$.  Now for $i\in \{0,\ldots,s\}$, let 
$M^{(i)} = \left( C_i, C_{i+1}, \ldots, C_{i+m-1} \right)$,  and let $d_i = \det(M^{(i)})$.

Then $M^{(0)}$ is an upper triangular matrix with $c_0$ in every position on
the diagonal, so $d_0 = c_0^m$.  Similarly $d_s = c_s^m$.
In general, $M^{(i)}$ has $c_i$ in every position on the diagonal, and every entry below the diagonal is either $0$ or $c_j$ for some $j<i$.  
We can therefore write $ d_i = c_i^m + \sum_{j=0}^{i-1} a_{ij}c_j$, with $a_{ij} \in \Z$.

Let $\delta_i = \gcd(d_0,\ldots,d_i)$ and $\gamma_i = \gcd(c_0,\ldots,c_i)$.  
It is straightforward to show by induction on $i$ that
the prime divisors of $\delta_i$ all divide $\gamma_i$. Hence, since
$\gamma_s = 1$, we have $\delta_s = \gcd(d_0,\ldots,d_s) = 1$, and
the $m \times m$ minors of $M$ are relatively prime, as claimed.

Hence, with $\gamma_i(M)$ and $\sigma_i$ defined as in
Proposition~\ref{smithform}, we have  $\gamma_m(M) = 1$.  
Since $\sigma_i = \gamma_i(M)/\gamma_{i-1}(M)$ and  all $\sigma_i\in \Z$, we must have $\gamma_i(M) = 1$ and $\sigma_i = 1$ for all $1\leq i\leq m$,
which proves the result.
\end{proof}

\subsubsection{Conclusion for Case 1}

We are now ready to classify the groups we have been considering in this subsection.

\begin{proposition}\label{2gen tf}  Let $H = \left\langle a,b\right\rangle$ be torsion-free and satisfy Hypothesis $(*)$.  
Then $H$ is isomorphic to either $\Z \wr \Z$ or a Gc-group.
\end{proposition}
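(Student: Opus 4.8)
The plan is to show that the group $H$ from Hypothesis $(*)$, when torsion-free and not isomorphic to $\Z \wr \Z$, is isomorphic to the Gc-group $G(\c)$ determined by the relation $\sum_{i=0}^s c_ib_i = 0$ already extracted in the preamble to Case 1. Since we have normalised so that $r=0$, $c_0,c_s \ne 0$ and $\gcd(c_0,\ldots,c_s)=1$ with $s \ge 1$, these are exactly the conditions needed for $G(\c)=\langle \alpha,\beta\rangle$ to be defined. The natural map to build is the homomorphism $\phi\colon G(\c)\to H$ sending $\alpha \mapsto a$ and $\beta \mapsto b$. First I would check that $\phi$ is well-defined, i.e.\ that the images satisfy the defining relators ${\cal R}_{\c}$: the relators $[\beta,\beta^{\alpha^i}]$ map to $[b,b^{a^i}]=[b_0,b_i]$, which hold because $B$ is abelian, and the relator $\beta^{c_0}\cdots(\beta^{\alpha^s})^{c_s}$ maps to $b_0^{c_0}\cdots b_s^{c_s}=1$, which is precisely our defining relation in $H$. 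Since $a,b$ generate $H$, the map $\phi$ is surjective.

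The substance of the proof is to show $\phi$ is injective, and the natural route is to compare the two groups structurally. Both $H$ and $G(\c)$ split as semidirect products over the subgroup generated by the conjugates of $b$ (resp.\ $\beta$) by powers of $a$ (resp.\ $\alpha$): we have $H = B\rtimes\langle a\rangle$ from Hypothesis $(*)$, and $G(\c)=J\rtimes\langle\alpha\rangle$ from Lemma~\ref{J}. Because $\phi$ carries $\langle\alpha\rangle$ onto $\langle a\rangle$ (both infinite cyclic, so this restriction is an isomorphism) and carries $J$ onto $B$ compatibly with the conjugation actions, it suffices to prove that the induced surjection $\bar\phi\colon J \to B$, sending $\beta_i \mapsto b_i$, is an isomorphism. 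By Lemma~\ref{J} we know $J \cong \J$, and in particular $J$ is the free abelian group on the $\beta_i$ modulo exactly the relations $\sum_{i=0}^s c_i\beta_{k+i}=0$ for $k\in\Z$.

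Thus the heart of the matter is to show that $B$ satisfies no relations among the $b_i$ beyond the $\Z[\langle a\rangle]$-translates of $\sum_{i=0}^s c_ib_i=0$; equivalently, the kernel of $\bar\phi$ is trivial. Here I would argue that any additive relation $\sum \delta_i b_i = 0$ holding in $B$ must already be a consequence of the translates of the given relation. The key input is the extraction at the start of Case 1: the relation we chose is, in a suitable sense, a \emph{minimal} or \emph{primitive} relation (with $\gcd$ of coefficients equal to $1$ and minimal support length $s$), together with the finitely-generated-abelian analysis embodied in Lemmas~\ref{J} and~\ref{smithc}. The finite subquotients $J_I = \langle \beta_i \mid i \in I\rangle$ were shown to be free abelian via the Smith normal form computation, so the analogous truncations $B_I = \langle b_i \mid i \in I\rangle$ of $B$ must have rank at least that of $J_I$; since $\bar\phi$ maps the free abelian group $J_I$ onto $B_I$ and a finitely generated abelian group cannot be a proper quotient of a free abelian group of the same rank without torsion appearing, torsion-freeness of $H$ forces $\bar\phi|_{J_I}$ to be injective for every $I$, and hence $\bar\phi$ is injective on all of $J=\bigcup_I J_I$.

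The main obstacle I anticipate is precisely this last injectivity step: ruling out ``extra'' relations in $B$ that are not already present in $J$. One must use in an essential way both the torsion-freeness of $H$ and the minimality of the chosen relation $\sum_{i=0}^s c_ib_i=0$ --- for instance, an extra shorter relation would contradict the choice of $s$, while an extra relation with the same support but non-proportional coefficients would, combined with the given one, produce either a shorter relation or a torsion element in $B$, contradicting our hypotheses. Making the rank/Smith-normal-form bookkeeping on the truncations $B_I$ match that of the $J_I$ precisely, so as to conclude equality of ranks rather than merely an inequality, is where the genuine work lies; the well-definedness and surjectivity of $\phi$ are routine by comparison.
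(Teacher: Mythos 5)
Your construction of $\phi$, its surjectivity, and the reduction to injectivity of the induced map $\bar\phi\colon J\to B$ via the two semidirect-product decompositions all match the paper's proof, and your idea of globalizing injectivity over the truncations $J_I$ (rather than over a single subgroup) is legitimate. But the step you yourself flag as ``where the genuine work lies'' is a genuine gap, and it is the entire content of the proposition: you assert that $B_I$ ``must have rank at least that of $J_I$'', and the word ``so'' introducing this claim points at the Smith normal form computation for $J_I$, which is a statement about $G(\c)$ and gives only the \emph{opposite} inequality (the surjection $J_I\twoheadrightarrow B_I$ yields $\mathrm{rank}(B_I)\le \mathrm{rank}(J_I)$). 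Nothing about the structure of $J$ can by itself lower-bound the rank of $B_I$; that bound must come from $H$, and as written your proof never extracts it.

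The missing argument is short, and it is exactly where the paper uses the minimal choice of $s$. By Lemma~\ref{smithc}, each $J_I$ (for $I$ an interval of length greater than $s$) is free abelian of rank $s$, and $B_I$ contains the $s$ consecutive conjugates $b_k,\ldots,b_{k+s-1}$, which are linearly independent: a nontrivial dependence, after conjugating by a power of $a$ and discarding zero end coefficients, gives $\sum_{i=0}^{\ell}\delta_i b_i=0$ with $\delta_0,\delta_\ell\neq 0$ and $\ell<s$; if $\ell=0$ then $b$ is a torsion element, contradicting torsion-freeness together with Hypothesis $(*)$, while if $\ell\ge 1$ then dividing by $d=\gcd(\delta_0,\ldots,\delta_\ell)$ --- legitimate because torsion-freeness of $H$ turns $dx=0$ into $x=0$ --- produces a normalized relation shorter than $s$, contradicting minimality. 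With that, $\mathrm{rank}(B_I)\ge s=\mathrm{rank}(J_I)$, and your rank argument (a surjection from $\Z^{s}$ onto a group of rank $s$ has trivial kernel) closes the proof. For comparison, the paper packages this same independence statement as ``$\phi$ is injective on $K$'' where $K=\langle \beta_0,\ldots,\beta_{s-1}\rangle$, and then globalizes differently: it shows $J/K$ is a torsion group (pushing the relation left and right by conjugating with $\alpha^{\pm 1}$, using $c_0,c_s\neq 0$) and invokes torsion-freeness of $G$ from Lemma~\ref{J} to kill $\ker\phi$. Your truncation route avoids the $J/K$ step, but it needs the independence claim for all shifts $k$, which conjugation by $a$ supplies at no extra cost; so once the gap is filled the two arguments are of comparable length.
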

\begin{proof}
Suppose $H$ is not isomorphic to $\Z \wr \Z$.  
Then, as we observed earlier, there exist $s \ge 1$ and $\c = (c_0,\ldots,c_s)\in \Z^{s+1}$ with $c_0,c_s\neq 0$ and $\gcd(c_0,\ldots,c_s) = 1$ 
such that $\sum_{i=0}^s c_i b_i = 0$ (or in multiplicative notation $b_0^{c_0}\cdots b_s^{c_s} = 1$). We choose $\c$ with $s$ minimal.

Let $G = G(\c) = \left\langle \alpha, \beta \mid {\cal R}_{\c} \right\rangle$.
Then since the relations of $G$ are satisfied by $a,b \in H$,
there is an epimorphism $\phi: G\rightarrow H$ with $\phi(\alpha) = a$ and $\phi(\beta) = b$.  
We shall show that $\phi$ is an isomorphism.
For convenience we shall write $\beta_i$ for $\beta^{\alpha^i}$ and $b_i$ for $b^{a^i}$.  Thus ${\cal R}_{\c}=
\left\{ [\beta_0, \beta_i]\; (i\in \Z), \; \beta_0^{c_0}\cdots\beta_s^{c_s} \right\}.$

Let $J$ and $K$ be the subgroups of $G$ defined by $J = \left\langle \beta_i \mid i\in \Z\right\rangle$ and 
$K = \left\langle \beta_0, \beta_1, \ldots, \beta_{s-1} \right\rangle$.  
We saw in Lemma~\ref{J} that $J$ (and hence also $K$) is abelian, 
$G = J\rtimes \left\langle \alpha \right\rangle$ and $G$ is torsion-free.  
If $\beta_0, \beta_1, \ldots, \beta_{s-1}$ were not linearly independent over $\Z$, then their images under $\phi$, 
namely $b_0, b_1, \ldots, b_{s-1}$, would not be linearly independent either, contradicting the minimality of $s$.  
Thus $K$ is free abelian of rank $s$.

We now show that $J/K$ is a torsion group.
We claim that for any $j\in \Z$, there exists some non-zero $\lambda_j\in \Z$ such that $\lambda_j \beta_j\in K$.  
This is clearly true for $0 \leq j\leq s-1$, and also for $j=s$ with
$\lambda_s = c_s$, since $c_s\beta_s = -\sum_{i=0}^{s-1} c_i\beta_i$.
We can then prove the claim by induction for all $j \ge s$ by repeatedly
conjugating by $\alpha$, and for all $j<0$ by conjugating by $\alpha^{-1}$.
So the generators of $J$ are torsion elements modulo $K$, and hence $J/K$ is a torsion group.
So, for each $g\in J$ there exists $t_g\in \N$ such that $t_g g\in K$.    

Let $g\in \ker \phi$.  Since $G = J \rtimes \left\langle \alpha \right\rangle$, 
we can write $g = g'\alpha^n$, where $g'\in J$, $n\in \Z$.  
So $1 = \phi(g) = \phi(g')a^n$, which implies $\phi(g') = a^n = 1$, so $n=0$ and $g\in J$.
Note that $\phi$ is injective on $K$, since $\phi(\beta_0),\ldots,\phi(\beta_{s-1})$ are linearly independent.  
So $\phi(g) = 0$ implies $\phi(t_g g) = 0$ with $t_g g\in K$,
and hence $t_g g=0$.
Since $G$ is torsion-free, this implies $g = 0$.
Thus $\ker \phi$ is trivial and $H$ is isomorphic to $G$.  
\end{proof}

\subsubsection{More about Gc-groups}

We finish this section with an embedding result for Gc-groups and some useful lemmas that follow from it.

\begin{proposition}\label{embed}
Let $G= G(\c)$ be a Gc-group.
Let $\{x_1,\ldots,x_s\}$ be a basis for $\Q^s$ over $\Q$ (the rationals under addition), and let $\Z = \left\langle y \right\rangle$.  
Let $Q = \Q^s\rtimes \Z$,  with the action of $y$ on $\Q^s$ define by
$x_i^y = x_{i+1}$ $(1 \le i < s)$, $x_s^y = \sum_{i=1}^{s}
-(c_{i-1}/c_s)x_i$.
%
%
Then $G$ can be embedded in $Q$.
\end{proposition}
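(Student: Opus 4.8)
The plan is to construct an explicit homomorphism $\Phi\colon G\to Q$ and prove it is injective. Since $G=J\rtimes\langle\alpha\rangle$ by Lemma~\ref{J} and $Q=\Q^s\rtimes\langle y\rangle$, the natural choice sends $\alpha\mapsto y$ together with a $\langle y\rangle$-equivariant map $J\to\Q^s$. Concretely I would set $\Phi(\alpha)=y$ and $\Phi(\beta)=x_1$, so that $\Phi(\beta_i)=\Phi(\beta)^{\Phi(\alpha)^i}=x_1^{y^i}$, which equals $x_{i+1}$ for $0\le i\le s-1$. To see that this prescription extends to a homomorphism I must check that the relators $\mathcal{R}_{\c}$ are satisfied by the images. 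Each commutator relator $[\beta,\beta^{\alpha^i}]$ maps to a commutator of elements of the abelian group $\Q^s$, so it holds; the single relator $\beta^{c_0}(\beta^\alpha)^{c_1}\cdots(\beta^{\alpha^s})^{c_s}$ translates to $\sum_{i=0}^s c_i\,x_1^{y^i}=0$ in $\Q^s$, and substituting the prescribed value $x_1^{y^s}=x_s^y=\sum_{j=1}^s-(c_{j-1}/c_s)x_j$ for the top term makes this sum cancel to $0$. Hence $\Phi$ is well defined, and it restricts to a homomorphism $J\to\Q^s$ since $\Phi(\beta_i)\in\Q^s$.

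The semidirect-product structure reduces injectivity to a statement about $J$. If $g\in\ker\Phi$, write $g=g'\alpha^n$ with $g'\in J$ and $n\in\Z$; projecting $\Phi(g)=\Phi(g')y^n$ onto $\langle y\rangle\cong\Z$ forces $n=0$, so $g=g'\in J$ and $\Phi(g)=0$ in $\Q^s$. It therefore suffices to prove that $\Phi$ is injective on $J$, which is the heart of the argument.

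To handle $\Phi|_J$ I would pass to module language over $R:=\Z[t,t^{-1}]$, letting $t$ act by conjugation by $\alpha$ (equivalently, as $y$). By Lemma~\ref{J} we have $J\cong\J$, and reading off the relations $\sum_{i=0}^s c_i\beta_{k+i}=0$ shows that as an $R$-module $J\cong R/(f)$, where $f(t)=\sum_{i=0}^s c_i t^i$. On the other side, the displayed action of $y$ on $\Q^s$ is the companion matrix of $f/c_s$, which is invertible because $c_0\neq 0$; hence $\Q^s\cong\Q[t,t^{-1}]/(f)$ as a module over $\Q[t,t^{-1}]$, and thus over $R$, via $x_i\mapsto t^{i-1}$. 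Under these identifications $\Phi|_J$ is exactly the map $R/(f)\to\Q[t,t^{-1}]/(f)$ induced by the coefficient inclusion $R\hookrightarrow\Q[t,t^{-1}]$.

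The main obstacle is to show that this induced map is injective, i.e. that whenever $g\in R$ is divisible by $f$ in $\Q[t,t^{-1}]$ it is already divisible by $f$ in $R$. This is exactly where the hypothesis $\gcd(c_0,\ldots,c_s)=1$ is used: it says $f$ is a primitive Laurent polynomial, so a Gauss's-lemma argument (working in the UFD $\Z[t,t^{-1}]$, or multiplying by a suitable power of the unit $t$ to reduce to the classical statement in $\Z[t]$) forces any factorisation $g=fh$ with $h\in\Q[t,t^{-1}]$ to have $h\in R$. Consequently $\ker(\Phi|_J)=0$, so $\Phi$ is injective and $G$ embeds in $Q$.
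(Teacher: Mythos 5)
Your proof is correct, and its first half coincides with the paper's: the paper likewise sends $\alpha\mapsto y$, $\beta\mapsto x_1$, notes that the relators are satisfied (it calls this ``routine''), and reduces injectivity to the subgroup $J$ via the semidirect decomposition. Where you genuinely diverge is in the key step, injectivity on $J$. The paper handles this by reusing the argument of Proposition~\ref{2gen tf}: the restriction of the map to $K=\langle\beta_0,\ldots,\beta_{s-1}\rangle$ is injective because $x_1,\ldots,x_s$ are linearly independent over $\Q$, the quotient $J/K$ is torsion, and $G$ is torsion-free by Lemma~\ref{J}, so a kernel element $g\in J$ has a multiple $t_g g$ lying in $K\cap\ker\theta=0$, forcing $g=0$. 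You instead identify $J$ with $R/(f)$ and $\Q^s$ with $\Q[t,t^{-1}]/(f)$ as modules over $R=\Z[t,t^{-1}]$, where $f=\sum_{i=0}^s c_it^i$, so that injectivity becomes the assertion $f\Q[t,t^{-1}]\cap R=fR$, which Gauss's lemma gives because $\gcd(c_0,\ldots,c_s)=1$ makes $f$ primitive. Both routes are sound, and both ultimately rest on the primitivity of $\c$, but they exploit it differently: the paper's argument is shorter given what is already established (torsion-freeness of $G$, which came from the Smith-normal-form computation of Lemma~\ref{smithc}), while yours bypasses Lemma~\ref{smithc} entirely, makes the role of the hypothesis $\gcd(c_0,\ldots,c_s)=1$ completely transparent, and yields a sharper structural conclusion: the image of $J$ is exactly $R/(f)$ inside its rationalisation, so $G\cong\bigl(\Z[t,t^{-1}]/(f)\bigr)\rtimes\Z$, which in particular re-proves that $J$, and hence $G$, is torsion-free, since $R/(f)$ embeds in a $\Q$-vector space.
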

\begin{proof}
Let $G = \langle \alpha, \beta\rangle$.
It is routine to show that $y,x_1 \in Q$ satisfy the defining relations of $G$,
so there is a homomorphism $\theta: G \rightarrow Q$ with
$\theta(\alpha) = y$, $\theta(\beta) = x_1$.
We can show that $\theta$ is injective using a similar argument to the
proof that $\phi$ is injective in Proposition~\ref{2gen tf}.
\end{proof}

\begin{lemma}\label{GBSsub}
Let $G=G(\c)=\left\langle \alpha, \beta\right\rangle$ be a Gc-group.
Then, for any $t \in \N$,  $H = \langle \alpha,\beta^t \rangle$ is a subgroup
of finite index in $G$ with $H \cong G$.
\end{lemma}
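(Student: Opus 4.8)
The plan is to carry out the whole argument inside the overgroup $Q = \Q^s \rtimes \Z$ supplied by Proposition~\ref{embed}. Using that embedding $\theta$, I would identify $G$ with $\theta(G) = \langle y, x_1\rangle \le Q$, so that $\alpha$ becomes $y$ and $\beta$ becomes $x_1$. Writing $M = G \cap \Q^s$ for the $\Z\langle y\rangle$-submodule of $\Q^s$ generated by $x_1$, Lemma~\ref{J} gives $G = M \rtimes \langle y\rangle$. Under this identification the subgroup $H = \langle \alpha, \beta^t\rangle$ (for $t \ge 1$) becomes $\langle y, x_1^t\rangle = \langle y, t x_1\rangle$, where I write $\Q^s$ additively. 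Everything then reduces to comparing the two submodules $M$ and $tM$ of $\Q^s$.

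For the isomorphism $H \cong G$, I would exploit the fact that the action of $y$ on $\Q^s$ is $\Q$-linear, and therefore commutes with multiplication by the scalar $t$. Hence the map $\mu_t$ that multiplies $\Q^s$ by $t$ and fixes $y$ respects the defining action $v \mapsto v^y$ of the semidirect product, and so extends to an automorphism of $Q$. Since $\mu_t(y) = y$ and $\mu_t(x_1) = t x_1$, it sends $G = \langle y, x_1\rangle$ isomorphically onto $\langle y, t x_1\rangle = H$. As $\theta$ has already identified $G$ and $H$ with subgroups of $Q$, this yields $H \cong G$.

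The only real work, and where I expect the main (though modest) obstacle to lie, is the index computation. I would first check that $H \cap \Q^s = tM$: this intersection is the subgroup of $\Q^s$ generated by the conjugates $(t x_1)^{y^i} = t\,(x_1^{y^i})$, which is exactly $t$ times the submodule generated by the $x_1^{y^i}$, namely $tM$. Since $y \in H$ and every element of $H$ has the form $w\,y^n$ with $w \in \Q^s$ forcing $w \in H \cap \Q^s$, we have $H = (H \cap \Q^s) \rtimes \langle y\rangle$, so that $[G:H] = [M : H\cap\Q^s] = [M : tM]$. Finiteness of $[M:tM]$ then follows because $M$, being a subgroup of $\Q^s$, is torsion-free abelian of finite rank $r \le s$: for every prime $p$ one has $\dim_{\Z/p\Z}(M/pM) \le r$ (given $r+1$ elements, a $\Q$-linear dependence among them has integer coefficients which may be taken coprime, and reduces to a nontrivial dependence modulo $pM$), so each $M/p^eM$ is finite, and $M/tM$ is finite by the Chinese Remainder Theorem. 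Care is needed only in verifying $H \cap \Q^s = tM$ and the reduction $[G:H] = [M:tM]$; once the embedding of Proposition~\ref{embed} is in hand, both the isomorphism and the finite-rank estimate are straightforward.
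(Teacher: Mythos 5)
Your proof is correct and follows essentially the same route as the paper's: embed $G$ in $\Q^s \rtimes \Z$ via Proposition~\ref{embed}, note that scalar multiplication by $t$ commutes with the $\Q$-linear action of $y$ and hence carries $G$ isomorphically onto $H$, and reduce the index computation to $[N:tN]$ where $N = G \cap \Q^s$. The only (immaterial) divergence is the final finiteness step, where the paper bounds $|N/tN| \le t^s$ directly, using the fact that finitely generated subgroups of $\Q^s$ need at most $s$ generators and that $N/tN$ has exponent dividing $t$, while you argue prime-by-prime via the rank bound on $N/pN$ and the Chinese Remainder Theorem.
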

\begin{proof}
By Proposition~\ref{embed}, we can identify $G$ with a subgroup of $\Q^s \rtimes \Z$ for some $s$, with $\beta \in \Q^s$. 
Multiplication by $t$ in $\Q^s$ is a $\Z$-module
isomorphism of $\Q^s$, and induces an isomorphism $G \rightarrow H$.
Let $N = G \cap \Q^s$ and $M = H \cap \Q^s$. Then ${|G:H|} = {|N:M|}$.
Now a finitely generated subgroup of $\Q^s$, and hence of $N$,
can be generated by at most $s$ elements.
So $N/M$ has the property that all of its finitely generated subgroups are
finite of order at most $t^s$.  This implies that
$N/M$ is finite of order at most $t^s$, so ${|G:H|}$ is finite.
\end{proof}

\begin{lemma}\label{GBSquot}
Let $G$ be a finitely generated group with finite normal subgroup $T$ 
such that $G/T$ is isomorphic either to the Gc-group $G(\c)$ or to $\Z \wr \Z$.
Then $G$ has a finite index subgroup isomorphic to $G(\c)$ or $\Z \wr \Z$ respectively.
\end{lemma}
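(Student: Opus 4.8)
The plan is to build the required subgroup directly as $H=B\rtimes\langle a\rangle$, where $\pi\colon G\to\overline G:=G/T$ is the quotient map, $a\in G$ is a fixed lift of a generator of the top cyclic factor of $\overline G$, and $B$ is an $\langle a\rangle$-invariant \emph{virtual complement} to $T$ inside the preimage $J:=\pi^{-1}(\overline J)$ of the abelian base $\overline J$ of $\overline G$. Keeping the \emph{same} top generator $a$, rather than a proper power of it, is exactly what will force $H\cong\overline G$ on the nose instead of only isomorphic to a finite-index subgroup of $\overline G$. I would first dispose of two preliminaries. Since $\overline G$ is metabelian, $G$ is (finite)-by-metabelian and hence soluble, so $T$ is soluble and $T'\subsetneq T$ whenever $T\neq1$; and since $\overline G$ is torsion-free (Lemma~\ref{J} for $G(\c)$, and directly for $\Z\wr\Z$), $T$ is exactly the torsion subgroup of $G$. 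These let me reduce to $T$ abelian by induction on $|T|$: if $T$ is non-abelian I pass to $G/T'$, whose finite normal subgroup $T/T'$ is abelian with quotient $\overline G$, apply the abelian case to obtain a finite-index $\widetilde H/T'\cong\overline G$, and then apply the inductive hypothesis to $\widetilde H$ with its smaller finite normal subgroup $T'$.

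Assume now $T$ is abelian. Since $G/J\cong\Z$ is free the extension $1\to J\to G\to\Z\to1$ splits, giving $G=J\rtimes\langle a\rangle$ with $\pi(a)=\bar a$ a generator of $\overline G/\overline J$. I regard $\overline J$ and its subgroups as modules over the group ring $\Z\langle\bar a\rangle$; by Lemma~\ref{J}, $\overline J\cong\J\cong\Z\langle\bar a\rangle/(f)$ with $f=\sum_{i=0}^s c_i\bar a^{\,i}$ in the $G(\c)$ case, while $\overline J\cong\Z\langle\bar a\rangle$ is free in the $\Z\wr\Z$ case. For any positive integer $k$ the submodule $k\overline J$ is $\bar a$-invariant and, because $\overline J$ is torsion-free, multiplication by $k$ is a module isomorphism $\overline J\to k\overline J$ (this is the module form of Lemma~\ref{GBSsub}). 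I use this freedom twice. First, $\overline J$ acts on $T$ through a finite group, so its kernel has some finite index $m$; then $J_1:=\pi^{-1}(m\overline J)$ centralises $T$, so $T\le Z(J_1)$ and the extension $1\to T\to J_1\to m\overline J\to1$ is central, with a biadditive commutator map $\omega\colon m\overline J\times m\overline J\to T$. Second, writing $e=\exp(T)$ and noting $\omega(e\overline x,e\overline y)=e^2\omega(\overline x,\overline y)=0$, the group $A:=\pi^{-1}(em\overline J)\le J_1$ is abelian.

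What remains is to split the $\Z\langle\bar a\rangle$-module extension $0\to T\to A\to em\overline J\to0$: a submodule $B\le A$ with $B\cap T=0$ and $B$ mapping onto a finite-index $\bar a$-invariant submodule of $\overline J$ will give $H=B\rtimes\langle a\rangle$ with $H\cap T=1$, hence $H\cong\pi(H)$ of finite index in $G$, and since $B\cong\overline J$ equivariantly, $H\cong\overline J\rtimes\langle\bar a\rangle=\overline G$. In the $\Z\wr\Z$ case $em\overline J$ is free, so the extension splits immediately. The $G(\c)$ case is the main obstacle: the extension need not split, the obstruction lying in $\mathrm{Ext}^1_{\Z\langle\bar a\rangle}(\Z\langle\bar a\rangle/(f),T)\cong T/fT$, which is in general non-zero. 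I would get around this by scaling once more. Choosing a lift $w\in A$ of a module generator of $em\overline J$ gives $fw=\tau\in T$; replacing $w$ by $w':=ew$ yields $fw'=e\tau\in eT=0$, so $w'$ generates a cyclic submodule $B=\Z\langle\bar a\rangle\,w'$ that is a homomorphic image of $\Z\langle\bar a\rangle/(f)\cong\overline J$. The delicate point — and the step that most needs care — is to check that this scaling has not collapsed $B$: composing $\overline J\to B$ with $B\to BT/T$ realises multiplication by a non-zero integer on $\overline J$, which is injective because $\overline J$ is torsion-free (Lemma~\ref{J}); hence $B\cong\overline J$ and $B\cap T=0$, completing the construction.
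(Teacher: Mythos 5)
Your construction in the $G(\c)$ case with $T$ abelian is correct, and it is essentially the paper's argument in module-theoretic dress: your three scalings (by $m$ to centralise $T$, by $e=\exp(T)$ to kill the commutator pairing, and by $e$ again to kill the obstruction $fw=\tau$) correspond to the paper's three replacements of $\beta$ by a power, after which $\langle\alpha,\beta\rangle$ satisfies the defining relations of $G(\c)$ and complements $T$. But two steps of your proposal are genuinely broken. The first is the reduction to abelian $T$. Finite-by-metabelian does not imply soluble ($A_5\times\Z$ is finite-by-abelian), so your opening claim fails, and with it the induction on $|T|$: if $T$ is perfect, say $T\cong A_5$, then $T'=T$ and passing to $G/T'$ accomplishes nothing. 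This matters because your method really does need $T$ central in $J_1$: the biadditive pairing $\omega$ exists only for central extensions, and when $T$ is non-abelian an element of $\pi^{-1}(m\overline J)$ is merely of the form $ct$ with $c\in C_G(T)$ and $t\in T$, so it need not centralise $T$. The paper's proof needs no such reduction: it centralises $T$ only by the conjugates $\beta_i$ of a single powered generator, and its commutator computations take place in the class-two group $\langle \beta_i \mid i\in\Z\rangle$, whose commutators lie in $T\cap C_G(T)=Z(T)$; no hypothesis on $T$ beyond finiteness is used. (If you add the hypothesis that $G$ is soluble---which holds in every application in the paper---your reduction is fine.)

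The second gap is the $\Z\wr\Z$ case, where your argument does not give finite index. There $\overline J\cong\Z\langle\bar a\rangle$ is a free module, and for $k\ge 2$ the submodule $k\overline J$ has \emph{infinite} index in $\overline J$ (the quotient is $(\Z/k\Z)\langle\bar a\rangle$); the finiteness of $|\overline J:k\overline J|$, which you import as ``the module form of Lemma~\ref{GBSsub}'', is a special feature of the Gc case. So your $H$ is a copy of $\Z\wr\Z$ of \emph{infinite} index in $G$, which is not what the lemma asserts. Moreover, no repair is possible, because the wreath half of the statement is in fact false. One can check that the only finite-index subgroup of $\Z\wr\Z$ isomorphic to $\Z\wr\Z$ is the whole group: if $S$ is such a subgroup with the ambient base $B$ and top generator $t$, then the base of $S$ must equal $S\cap B$, must be free of rank one over $\Z[t^{\pm r}]$ (where $t^r$ generates the image of $S$ at the top), and must have finite index in $B$, which has rank $r$ over $\Z[t^{\pm r}]$; this forces $r=1$ and then $S\cap B$ to be a principal ideal of finite index in $\Z[t^{\pm 1}]$, hence the whole ring, whence $S=\Z\wr\Z$. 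Consequently, take a central extension $1\to C_2\to G\to\Z\wr\Z\to 1$ in which the preimage $\widetilde B$ of the base is non-abelian (such $G$ exist, are finitely generated, and have $T\cong C_2$ finite normal with $G/T\cong\Z\wr\Z$): a finite-index subgroup $S\cong\Z\wr\Z$ of $G$ would be torsion-free, hence meet $T$ trivially, hence map onto a finite-index copy of $\Z\wr\Z$, which by the above is everything; so $S$ would be a complement to $T$, forcing $\widetilde B$ to be abelian---a contradiction. The paper itself only sketches the Gc case and ``leaves the wreath case to the reader''; note that in all of its applications only the \emph{existence} of a $\Z\wr\Z$ subgroup is ever used, and that weaker conclusion is delivered both by your construction and by the paper's.
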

\begin{proof}
Suppose that $G/T$ is isomorphic to the Gc-group
$G(\c)=\langle \alpha T,\beta T \rangle$. By the previous lemma, for any $t\in \N$,  
$\langle \alpha T,\beta^t T \rangle$ is isomorphic to $G(\c)$ and has finite index in
$G/T$, so by replacing $\beta$ by a suitable power we may assume that
$\beta \in C_G(T)$.  Define $\beta_i = \beta^{\alpha^i}$ for $i \in \Z$,
and denote the exponent of $T$ by $m$.
Then the elements $\beta_i^m$ all commute so
by passing to a finite index subgroup again, we may assume that
the $\beta_i$ all commute. Then
\[ (\beta_0^m)^{c_0}(\beta_1^m)^{c_1} \cdots (\beta_s^m)^{c_s} =
(\beta_0^{c_0}\beta_1^{c_1} \cdots \beta_s^{c_s})^m = 1, \]
where $\c = (c_0,\ldots,c_s)$, so by passing to a finite index subgroup a third time 
we may assume $\langle \alpha,\beta \rangle \cong G(\c)$ is a complement of $T$ in $G$ and hence has finite index in $G$.
The proof for $G/T \cong \Z \wr \Z$ is left to the reader.
\end{proof}

Henceforth we shall only be interested in Gc-groups that are not virtually
abelian.  We shall call a non virtually abelian Gc-group a {\em proper Gc-group}.

\subsection{Case 2:  $b$ is a torsion element}

\begin{proposition}\label{2gen tor}  Let $H = \left\langle a,b\right\rangle$ with $b$ a torsion element and assume $H$ satisfies Hypothesis $(*)$.  
Then $H$ has a subgroup isomorphic to $C_p \wr \Z$ for some prime $p$.
\end{proposition}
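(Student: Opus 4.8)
\emph{Proof plan.} Since $b$ is a torsion element and each $b_i=b^{a^i}$ is conjugate to $b$, all the $b_i$ have the same finite order $n:=|b|$, and as $B$ is abelian it is a torsion group of exponent $n$. Regard $B$ as a module over $R:=\Z[x,x^{-1}]$, where $x$ acts as conjugation by $a$, so that $b_i=x^ib_0$; then $B=Rb_0$ is a \emph{cyclic} $R$-module, and $n\,b_0=0$. The whole proposition reduces to the following claim: there is a prime $p$ and an element $v\in B$ of order $p$ whose conjugates $v^{a^i}\ (i\in\Z)$ are linearly independent over $\mathbb{F}_p$ --- equivalently, $Rv\cong R/(p)$. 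Indeed, for such a $v$ the subgroup $\langle v^{a^i}\mid i\in\Z\rangle=Rv$ is elementary abelian with $a$ permuting a basis as the shift $x$, so $\langle a,v\rangle=Rv\rtimes\langle a\rangle\cong C_p\wr\Z$, using that $|a|$ is infinite.

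First I would observe that $B$ must be infinite. If $B$ were finite then $\Aut(B)$ would be finite, so conjugation by some $a^k$ ($k\ge1$) would centralise $B$; then $\langle B,a^k\rangle=B\times\langle a^k\rangle$ would be an abelian subgroup of index $k$ in $H$, contradicting Hypothesis~$(*)$. Next, decompose $B=\bigoplus_{p\mid n}B_p$ into its primary components. Conjugation by $a$ preserves the order of each element, hence preserves each $B_p$, so every $B_p$ is an $R$-submodule; moreover $B_p=R\,u_p$ is cyclic, generated by the $p$-part $u_p$ of $b_0$ (of order $p^{e}$ with $p^{e}\parallel n$), and is therefore a cyclic module over $R_p:=R/(p^{e})=(\Z/p^{e})[x,x^{-1}]$. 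Since only finitely many primes divide $n$ and $B$ is infinite, some $B_p$ is infinite; fix such a prime $p$ and write $B_p=R_p/I$ with $I=\mathrm{Ann}_{R_p}(u_p)$.

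The heart of the argument is to produce $v$ inside this infinite cyclic $R_p$-module. I would first record the dichotomy: if $I$ contains some $g$ with $g\not\equiv0\pmod p$, then $B_p$ is finite. Indeed $A:=R_p/(g)$ then satisfies $|A/pA|=|\mathbb{F}_p[x,x^{-1}]/(\bar g)|<\infty$, and multiplication by $p^{i}$ maps $A/pA$ onto $p^{i}A/p^{i+1}A$, so the filtration $A\supseteq pA\supseteq\cdots\supseteq p^{e}A=0$ has finite factors; hence $A$, and its quotient $B_p$, are finite. As $B_p$ is infinite we must therefore have $I\subseteq(p)$. Now $(p)$ is the unique minimal prime of the Noetherian ring $R_p$ (its nilradical, with $R_p/(p)=\mathbb{F}_p[x,x^{-1}]$ a domain), so $I\subseteq(p)$ makes $(p)$ the minimal prime of the support of $B_p$ and hence an associated prime of $B_p$. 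Thus $(p)=\mathrm{Ann}_{R_p}(v)$ for some $v\in B_p$, giving an embedding $R/(p)=\mathbb{F}_p[x,x^{-1}]\hookrightarrow B_p$; this $v$ has order $p$ and satisfies $Rv\cong R/(p)$, exactly as the claim requires.

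The step I expect to be the main obstacle is this last one. The naive candidate for $v$, namely $p^{e-1}u_p$, does generate a submodule of $B_p[p]$ but need not be free over $\mathbb{F}_p[x,x^{-1}]$ --- already for $e=2$ one can have $\mathrm{Ann}(p^{e-1}u_p)\supsetneq(p)$ --- so some genuine commutative-algebra input, namely that a minimal prime over the annihilator is associated, seems unavoidable in locating an order-$p$ element with free orbit. Everything else (the reduction to a single prime, and the identification of $\langle a,v\rangle$ with $C_p\wr\Z$) is routine.
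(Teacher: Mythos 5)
Your proof is correct, and it takes a genuinely different route from the paper's. The paper argues by induction on $|b|$: if $|b|=p$ is prime, then either $H\cong C_p\wr\Z$ outright, or some relation $\sum_{i=r}^s\gamma_ib_i=0$ with $p\nmid\gamma_r,\gamma_s$ forces $B=\langle b_r,\ldots,b_{s-1}\rangle$ and $H$ virtually abelian, contrary to Hypothesis $(*)$; and if $|b|=np$ with $n>1$, it considers $H_1=\langle a,b^n\rangle$, and when $H_1$ is virtually abelian it replaces $a$ by a power $a^k$ centralising $b^n$ (after checking $\langle a^k,b\rangle$ is still not virtually abelian) and then replaces $b$ by $b'=b_0b_1^{-1}$, which satisfies $(b')^n=1$, so induction applies to the finite-index subgroup $H_2=\langle a,b'\rangle$. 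Your argument is instead direct and module-theoretic: realise $B$ as a cyclic $\Z[x,x^{-1}]$-module, isolate an infinite primary component $B_p\cong R_p/I$, prove $I\subseteq(p)$ via your finiteness dichotomy (which is sound: the factors of $A\supseteq pA\supseteq\cdots\supseteq p^eA=0$ are epimorphic images of the finite group $A/pA$), and then invoke the standard Noetherian fact that a minimal prime of the support of a nonzero finitely generated module is an associated prime, producing $v$ with $\mathrm{Ann}_{R_p}(v)=(p)$ and hence the subgroup $Rv\rtimes\langle a\rangle\cong C_p\wr\Z$. All the steps check, including the one you flag as delicate: the naive candidate $p^{e-1}u_p$ really can fail (for instance $e=2$, $I=(p(x-1))$ gives $\mathrm{Ann}(pu_p)=(p,x-1)\supsetneq(p)$, although there $(x-1)u_p$ works), and this failure is precisely the difficulty that the paper's inductive passage to $b_0b_1^{-1}$ circumvents. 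As for what each approach buys: the paper's proof is elementary, self-contained group theory, and its base case shows slightly more (that $H$ itself is $C_p\wr\Z$ when $|b|=p$); yours costs one citation to commutative algebra (minimal primes of the support are associated, as in Eisenbud or Matsumura), but it avoids induction and pinpoints the structural source of the wreath product --- an order-$p$ element with free $\langle a\rangle$-orbit --- in a way that is consonant with the paper's own use of Hilbert's Basis Theorem and $\Z[a]$-module language in the proof of Proposition~\ref{2gen 3}.
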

\begin{proof}

The proof is by induction on the order $|b|$ of $b$.
If $|b| = p$ is prime, then $H\cong C_p\wr \Z$.  For if not, then some relation $\sum_{i=r}^s \gamma_i b_i = 0$ with $p \nd \gamma_r, \gamma_s$
is true in $H$, and by conjugating by powers of $a$, we see that
$B = \langle b_r, \ldots, b_{s-1}\rangle$ and $H$ is virtually abelian.

Suppose $|b|=np$ with $p$ prime and $n>1$, and let $H_1 = \left\langle a, b^n\right\rangle$.  
Since $|b^n|=p$, either $H_1\cong C_p\wr \Z$ or $H_1$ is virtually abelian.  
In the first case we are done, so suppose that $H_1$ is virtually abelian.
Then $\left\langle b_i^n \mid i\in \Z\right\rangle$ is finitely generated 
and so some power of $a$, say $a^k$, centralises $b^n$.  
If $\left\langle a^k,b\right\rangle$ is virtually abelian, then $\left\langle b_{ik} \mid i\in \Z\right\rangle$ is finitely generated, 
hence finite.  But then some power of $a^k$ centralises $b$ and hence $B$, implying that $H$ is virtually abelian.
Thus $\left\langle a^k, b\right\rangle$ is not virtually abelian,
so we can replace $a$ by $a^k$ and thereby
assume that $a$ itself centralises $b^n$, and hence
$b_i^n = b_j^n$ for all $i,j\in \Z$.  
Put $b' = b_0b_1^{-1}$, $H_2 = \left\langle a, b'\right\rangle$,  
$b'_i = (b')^{a^i}$ and
$\widehat{B} = \left\langle b'_i \mid i\in \Z\right\rangle$. 
Then ${|B:\widehat{B}|}$ and hence ${|H:H_2|}$ is finite, so $H_2$ cannot
be virtually abelian.  Also, $(b')^n = b_0^nb_1^{-n} = 1$, so
$|b'|$ divides $n$, and
the result follows by the inductive hypothesis applied to $H_2$.  
\end{proof}

\subsection{The general case}


\begin{proposition}\label{2gen 3}  Let $H = \left\langle a, b\right\rangle$ and assume $H$ satisfies Hypothesis $(*)$.  
Then one of the following holds:
\begin{enumerate}
\item $H$ has a subgroup isomorphic to $C_p\wr \Z$ for some prime $p$;
\item $H$ has a subgroup that is isomorphic to
$\Z \wr \Z$ or to a proper Gc-group.
\end{enumerate}
\end{proposition}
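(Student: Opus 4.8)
The plan is to split according to whether the torsion subgroup of $B$ is finite or infinite, reducing the infinite case to Proposition~\ref{2gen tor} and the finite case to Proposition~\ref{2gen tf}. To set this up, I would regard $B$ as a module over the group ring $\Z[\langle a\rangle]$, which I identify with the Laurent polynomial ring $R = \Z[t,t^{-1}]$ by letting $t$ act as conjugation by $a$, so that $b_i = t^i\cdot b$. Since $B = \langle b_i \mid i\in\Z\rangle$, it is generated as an $R$-module by the single element $b$; thus $B$ is a cyclic, and in particular finitely generated, $R$-module. As $R$ is Noetherian, every $R$-submodule of $B$ is finitely generated. Let $T$ be the set of elements of $B$ of finite order. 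Because $B$ is abelian, $T$ is a subgroup, and since conjugation by $a$ preserves element orders, $T$ is invariant under the $R$-action; hence $T$ is an $R$-submodule, so it is normal in $H$ and finitely generated over $R$, say $T = Ry_1 + \cdots + Ry_m$.

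Case $T$ infinite. If every $Ry_j$ were finite, then $T$, being the sum of finitely many finite subgroups of the abelian group $B$, would be finite; so some $Ry_j$ is infinite. Put $b' = y_j$. Then $b'$ is a torsion element, $|a|$ is infinite, and the base group $\langle (b')^{a^i}\mid i\in\Z\rangle = Ry_j$ is an infinite abelian torsion group. I claim $\langle a,b'\rangle$ is not virtually abelian: otherwise it would be polycyclic, so its subgroup $Ry_j$ would be a finitely generated torsion abelian group and hence finite, a contradiction. Thus $\langle a, b'\rangle$ satisfies Hypothesis~$(*)$ with the torsion generator $b'$, and Proposition~\ref{2gen tor} yields a subgroup of $\langle a,b'\rangle \le H$ isomorphic to $C_p \wr \Z$ for some prime $p$. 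This is conclusion (i).

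Case $T$ finite. Here I would pass to $\bar H = H/T = (B/T)\rtimes \langle aT\rangle$. Since $T \le B$ and $H = B\rtimes\langle a\rangle$, the image $aT$ has infinite order, and $\bar H$ is torsion-free: any torsion element of $\bar H$ maps trivially to $\langle a\rangle\cong\Z$ and so lies in $B/T$, which is torsion-free by the definition of $T$. As $T$ is finite and $H$ is not virtually abelian, neither is $\bar H$, so $\bar H$ is torsion-free and satisfies Hypothesis~$(*)$. Proposition~\ref{2gen tf} then gives $\bar H \cong \Z\wr\Z$ or $\bar H \cong G(\c)$ for some $\c$, the latter being a proper Gc-group since $\bar H$ is not virtually abelian. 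Applying Lemma~\ref{GBSquot} to $H$ with the finite normal subgroup $T$, I conclude that $H$ has a finite-index subgroup isomorphic to $\Z\wr\Z$ or to $G(\c)$; being of finite index in the non virtually abelian group $H$, this subgroup is itself non virtually abelian, hence is $\Z\wr\Z$ or a proper Gc-group. This is conclusion (ii).

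The main obstacle is the infinite-torsion case, and the crucial point that makes the whole argument work is that $T$ is finitely generated as an $R$-module, obtained from the Noetherianity of $R=\Z[t,t^{-1}]$ together with the cyclicity of $B$. This finiteness forces $T$ infinite to produce a \emph{single} torsion element $b'$ whose $a$-conjugates already generate an infinite group, so that Proposition~\ref{2gen tor} can take over. The remaining steps---checking that $T$ is an $R$-submodule, that $\langle a,b'\rangle$ and $\bar H$ inherit Hypothesis~$(*)$, and that the finite-index subgroup supplied by Lemma~\ref{GBSquot} stays non virtually abelian---are the routine verifications needed to invoke Propositions~\ref{2gen tf} and~\ref{2gen tor}.
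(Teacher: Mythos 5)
Your proof is correct and takes essentially the same approach as the paper: both use Noetherianity of the Laurent polynomial ring (Hilbert's Basis Theorem) to make the torsion subgroup $T$ of $B$ finitely generated as a module, reduce the case of infinite $T$ to Proposition~\ref{2gen tor} by producing a single torsion element whose $a$-conjugates generate an infinite group, and handle finite $T$ by applying Proposition~\ref{2gen tf} and Lemma~\ref{GBSquot} to the torsion-free quotient $H/T$. The only cosmetic difference is that the paper peels off the case $|b|$ finite at the start, which your dichotomy on whether $T$ is finite absorbs automatically.
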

\begin{proof}
If $|b|$ is finite, then we are done by Proposition~\ref{2gen tor}, so assume
$|b|$ is infinite.  Any torsion elements of $H$ are contained in $B$.
Since $B$ is abelian, its torsion elements form a subgroup $T$, which is normal in $G$.  

Since $B$ is finitely generated (by $b$) as a $\Z[a]$-module,
Hilbert's Basis Theorem tells us that all $\Z[a]$-submodules of $B$ are finitely generated.  
In particular, $T$ is finitely generated as a $\Z[a]$ module.  
So there is a finite subset $\{t_1,\ldots,t_n\}$ of $T$ such that
\[ T = \left\langle t_i^{p(a)} \mid p(a)\in \Z[a], 1\leq i\leq n\right\rangle = \left\langle t_i^{a^j} \mid j\in \Z, 1\leq i\leq n\right\rangle.\]

By Proposition~\ref{2gen tor}, we can assume that 
$\left\langle a,t\right\rangle$ is virtually abelian for all $t\in T$,
and hence that $T$ is finite.  
So $H/T$ cannot be virtually abelian.
Now $H/T$ is a torsion-free group satisfying Hypothesis $(*)$ and the result
follows from Proposition~\ref{2gen tf} and Lemma~\ref{GBSquot}.
\end{proof}

\section{Metabelian groups}\label{metasec}

We require two further results in order to deal with metabelian groups that have no subgroups satisfying Hypothesis $(*)$.

\begin{proposition}\label{polycyclic}  
Let $G$ be a finitely generated group with a normal abelian subgroup $N$ such that $G/N$ is free abelian.  If for all $a\in G$, $b\in N$, the subgroup $\langle a,b\rangle$ is virtually abelian, then $G$ is polycyclic.
\end{proposition}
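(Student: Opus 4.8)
The plan is to reduce the statement to showing that the abelian normal subgroup $N$ is finitely generated as an abelian group. Once this is known, $G$ is an extension of the polycyclic group $N$ by the polycyclic group $G/N$, and is therefore itself polycyclic. Note that $G/N$ is a finitely generated free abelian group (being a quotient of the finitely generated group $G$), so $G/N \cong \Z^d$ for some finite $d$.

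First I would equip $N$ with a module structure. Since $N$ is abelian and normal in $G$, conjugation makes $N$ a module over $R = \Z[G/N] \cong \Z[t_1^{\pm 1},\ldots,t_d^{\pm 1}]$, where $t_i$ acts as conjugation by a lift $a_i \in G$ of a fixed basis of $G/N$. Because $G$ is finitely generated and $G/N \cong \Z^d$ is finitely presented, Proposition~\ref{G/N} shows that $N$ is the normal closure in $G$ of a finite set $\{b_1,\ldots,b_k\}$; as $N$ is abelian, this normal closure coincides with the $R$-submodule generated by these elements, so $N = \sum_{j=1}^k R b_j$ is finitely generated as an $R$-module.

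The key step, and the main obstacle, is to show that each cyclic submodule $Rb$ (for $b \in N$) is finitely generated as an abelian group. The essential input is a reinterpretation of the hypothesis: for $a \in G$ and $b \in N$, the subgroup $\langle b^{a^i} \mid i\in \Z\rangle$ is precisely the normal closure of $b$ in $\langle a,b\rangle$ (using that $N$ is abelian), hence a subgroup of the finitely generated virtually abelian (and therefore polycyclic) group $\langle a,b\rangle$, so it is finitely generated abelian. In module terms, $\Z[t_i^{\pm 1}]b$ is finitely generated as an abelian group for every $b \in N$ and every $i$. I would then prove by induction on $e$ that $\Z[t_1^{\pm 1},\ldots,t_e^{\pm 1}]b$ is finitely generated abelian for all $b \in N$: writing this module as $\Z[t_{e+1}^{\pm 1}]$ applied to $\Z[t_1^{\pm 1},\ldots,t_e^{\pm 1}]b = \langle c_1,\ldots,c_r\rangle$, it equals $\sum_l \Z[t_{e+1}^{\pm 1}]c_l$, a finite sum of finitely generated abelian subgroups of $N$, and is therefore finitely generated. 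The crux is that the hypothesis must be applied to \emph{all} elements of $N$, not merely the module generators $b_j$, so that the freshly produced elements $c_l$ can be fed back into it at each stage.

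Finally, taking $e = d$ shows that each $R b_j$ is finitely generated abelian, whence $N = \sum_{j=1}^k R b_j$ is a finite sum of finitely generated abelian subgroups and so is itself finitely generated. This completes the reduction, and hence the proof that $G$ is polycyclic.
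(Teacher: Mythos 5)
Your proof is correct and takes essentially the same route as the paper's: both reduce the statement to showing that $N$ is finitely generated (whence $G$ is polycyclic), invoke Hall's result (Proposition~\ref{G/N}) to express $N$ as the normal closure of finitely many elements, and establish finite generation of each such closure by induction over the generators of $G/N$, crucially applying the hypothesis to the newly produced elements of $N$ at each stage. Your module-theoretic formulation simply spells out the ``straightforward induction argument'' that the paper leaves to the reader.
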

\begin{proof}
Suppose that $\langle a,b\rangle$ virtually abelian for all $a\in G$, $b\in N$.
Then $\left\langle b^{a^i} \mid i\in \Z \right\rangle$ is finitely generated for any $a\in G$, $b\in N$.  
We will show that $N$ is finitely generated and thus $G$ is polycyclic.

If $G/N = \langle Na_1, \ldots, Na_k \rangle$, then  any $g\in G$ can be
written in the form $g = b a_1^{r_1}\ldots a_k^{r_k}$, where
$b\in N, r_i\in \Z$.  It can be shown by a straightforward induction argument
that, for $b\in N$, $H_b := \left\langle b^g \mid g\in G \right\rangle$
is finitely generated.
Since $G/N$ is finitely presented, by Proposition~\ref{G/N}, $N$ is the normal
closure in $G$ of a finite set of elements and hence $N$ is finitely generated
as claimed.  
\end{proof}

\begin{lemma}\label{polylemma}
If a group $G$ is polycyclic and not virtually abelian, then $G$ has a subgroup
that is isomorphic to a proper Gc-group.
\end{lemma}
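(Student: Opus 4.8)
The plan is to locate inside $G$ a two-generator subgroup $H$ satisfying Hypothesis~$(*)$ and then invoke Proposition~\ref{2gen 3}. Once such an $H$ is found, that proposition supplies a subgroup $K \le H \le G$ isomorphic to $C_p \wr \Z$, to $\Z \wr \Z$, or to a proper Gc-group. Since $G$ is polycyclic, so is $K$; but neither $C_p \wr \Z$ nor $\Z \wr \Z$ is polycyclic, as each has an infinitely generated (hence non-Noetherian) abelian base group. Therefore $K$ must be a proper Gc-group, and the whole problem reduces to producing $H$.

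To build $H$ it suffices to find an abelian subgroup $N$ and an element $a$ normalising $N$ such that $a$ acts on $N$ as an element of infinite order of $\Aut(N)$. Indeed, the induced action on $V = N \otimes \Q$ then has infinite order, so some cyclic $\Q[a]$-submodule $\Q[a]v \le V$ carries an infinite-order action. Choosing $b \in N$ whose image is a nonzero multiple of $v$, the group $H = \langle a,b \rangle$ has $B = \langle b^{a^i} \mid i\in\Z\rangle \le N$ abelian, has $a$ of infinite order, and is not virtually abelian, since a virtually abelian such $H$ would force a power of $a$ to centralise a finite-index subgroup of $B$, contradicting the infinite order of the action on $\Q[a]v$. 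Thus $H$ satisfies Hypothesis~$(*)$.

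It remains to produce $N$ and $a$ when $G$ is not virtually abelian, which is the heart of the matter. First I would pass to a torsion-free (poly-$\Z$) subgroup of finite index; this is legitimate because being virtually abelian is a finite-index invariant, and a proper Gc-subgroup of a finite-index subgroup is a subgroup of $G$. So assume $G$ is torsion-free and let $F$ be its Fitting subgroup, a torsion-free nilpotent group with $C_G(F) \le F$. If $F$ is abelian, then $C_G(F) = F$, so $F$ is self-centralising and $G/F$ embeds in $\Aut(F) \cong \GL(r,\Z)$, where $r$ is the rank of $F$; here $G/F$ is infinite, for otherwise $F$ would be an abelian subgroup of finite index, making $G$ virtually abelian. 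An infinite polycyclic group has an element of infinite order, so we obtain $\bar a \in G/F$ acting on $F$ with infinite order, and we take $N = F$ with $a$ any lift of $\bar a$. If instead $F$ is non-abelian, I work inside $F$: choosing $N$ to be a maximal abelian normal subgroup of $F$, the fact that a maximal abelian normal subgroup of a nilpotent group is self-centralising gives $C_F(N) = N$, so $F/N$ embeds in $\Aut(N)$. As a torsion-free finitely generated nilpotent group with a finite-index abelian subgroup is abelian, and $F$ is not, $F/N$ is infinite and so again contains an element $a$ of infinite order acting on $N$ with infinite order; since $N \lhd F$ and $a \in F$, the orbit $\langle b^{a^i}\rangle$ lies in $N$, as required.

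The main obstacle is precisely this last case analysis: guaranteeing an action of \emph{infinite} order, rather than merely a non-trivial one, is what forces the resulting Gc-group to be proper. The supporting facts I would assemble are that $C_G(F) \le F$ for the Fitting subgroup $F$ of a polycyclic group, that a maximal abelian normal subgroup of a nilpotent group is self-centralising, and that a torsion-free finitely generated nilpotent group with a finite-index abelian subgroup is abelian (which follows from uniqueness of roots in torsion-free nilpotent groups together with Proposition~\ref{schur}). Each is standard, but marshalling them to deliver an infinite-order action is the delicate step.
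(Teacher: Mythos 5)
Your proposal is correct, but it reaches the key configuration by a genuinely different route from the paper. The paper's proof is short precisely because it outsources the heart of the matter: it quotes the second and third paragraphs of the proof of Theorem 16 in \cite{HRRT} to obtain an element $a\in G$ and a non-trivial free abelian normal subgroup $N\lhd G$ with $\langle a,N\rangle$ not virtually abelian, and then selects a suitable $b$ by a pigeonhole argument over the finitely many generators $b_1,\ldots,b_k$ of $N$: if every $\langle a,b_i\rangle$ were virtually abelian, then $\langle a^s,N^t\rangle$ (for suitable $s,t$) would be abelian of finite index in $\langle a,N\rangle$, a contradiction; the resulting two-generator subgroup is then identified as a proper Gc-group via the Section~2 machinery (with $\Z\wr\Z$ ruled out, implicitly, by polycyclicity). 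You instead prove the existence of the pair $(a,N)$ from scratch: passing to a torsion-free finite-index subgroup, taking the Fitting subgroup $F$ with $C_G(F)\leq F$, and splitting into the cases $F$ abelian (where $G/F$ embeds in $\Aut(F)\cong\GL(r,\Z)$ and is infinite) and $F$ non-abelian (where you work inside $F$ with a maximal abelian normal subgroup, using that such a subgroup of a nilpotent group is self-centralising and that a torsion-free finitely generated nilpotent group which is virtually abelian is abelian); and you select $b$ not by pigeonhole over generators but by decomposing $N\otimes\Q$ into cyclic $\Q[a]$-modules and picking one on which the action has infinite order. Both arguments then funnel into Hypothesis $(*)$ and the two-generator propositions, with the wreath products excluded because subgroups of polycyclic groups are finitely generated. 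What the paper's approach buys is brevity, at the cost of an external citation; what yours buys is a self-contained proof resting only on standard facts about polycyclic and nilpotent groups (which, in effect, reproves the relevant portion of the HRRT argument), at the cost of a longer case analysis. Your verification that $H$ is not virtually abelian (a power of $a$ centralising a finite-index subgroup of $B$ would force the action on $\Q[a]v$ to have finite order) is sound and plays the same role as the paper's $\langle a^s,N^t\rangle$ contradiction.
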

\begin{proof}
By the second and third paragraphs of the proof of Theorem 16 in \cite{HRRT},
there exist $a\in G$ and a non-trivial free abelian subgroup $N\lhd G$ such that $\langle a, N\rangle$ is not virtually abelian.
Let $N = \langle b_1,\ldots,b_k \rangle$. If $\langle a,b_i \rangle$ is
virtually abelian for some $i$, then there exist $s_i,t_i \in \N$ such that 
$a^{s_i} \in C_G(b_i^{t_i})$.
If this were true for all $i$ with $1 \le i \le k$, then, putting $s= \max\{s_i\}$ and $t= \max\{t_i\}$, $\langle a^s, N^t \rangle$
would be an abelian subgroup of finite index in $\langle a, N\rangle$,
contrary to assumption. So at least one of the subgroups $\langle a,b_i \rangle$
is isomorphic to a proper Gc-group, 
by Proposition~\ref{2gen tor}.
\end{proof}
   
We are now ready to prove the full result for metabelian groups.

\begin{theorem}\label{metabelian}  Let $G$ be a finitely generated metabelian group that is not virtually abelian.  
Then $G$ has a finitely generated subgroup isomorphic to one of the following:
\begin{enumerate}
\item $\Z \wr \Z$; 
\item $C_p \wr \Z$ for some prime $p$;
\item A proper Gc-group.
\end{enumerate}
\end{theorem}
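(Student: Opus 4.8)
The plan is to dichotomise according to whether $G$ contains a two-generator subgroup, built from the derived subgroup, that satisfies Hypothesis~$(*)$. Throughout I would set $N = G'$; since $G$ is metabelian, $N$ is abelian and normal in $G$, and $G/N$ is a finitely generated abelian group. The whole argument then reduces to feeding the correct subgroup into either Proposition~\ref{2gen 3} or the combination of Proposition~\ref{polycyclic} with Lemma~\ref{polylemma}.

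First I would ask whether there exist $a \in G$ and $b \in N$ with $H := \langle a,b\rangle$ not virtually abelian. If so, I claim $H$ satisfies Hypothesis~$(*)$. Indeed, because $b \in N$ and $N$ is abelian and normal, the subgroup $B = \langle b^{a^i} \mid i \in \Z\rangle$ lies in $N$ and is abelian; moreover $a$ must have infinite order, since otherwise the elements $b^{a^i}$ would take only finitely many values, $B$ would be a finitely generated abelian normal subgroup of finite index in $H$, and $H$ would be virtually abelian, contrary to assumption. With Hypothesis~$(*)$ verified, Proposition~\ref{2gen 3} hands me a subgroup of $H$, and hence of $G$, isomorphic to $C_p \wr \Z$, to $\Z \wr \Z$, or to a proper Gc-group, and this case is finished.

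In the remaining case, $\langle a,b\rangle$ is virtually abelian for every $a \in G$ and $b \in N$, and the goal is to invoke Proposition~\ref{polycyclic}. That proposition requires a free abelian quotient, whereas $G/N$ may have torsion, so I would first pass to a finite-index subgroup: choose a free abelian subgroup of finite index in the finitely generated abelian group $G/N$ and let $H$ be its preimage in $G$. Then $H$ has finite index in $G$ (so it is finitely generated), contains $N$, and $H/N$ is free abelian, while $N$ remains abelian and normal in $H$. The case hypothesis holds for all $a \in H$, $b \in N$ because it holds for all such pairs in $G \supseteq H$, so Proposition~\ref{polycyclic} shows that $H$ is polycyclic. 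Since $H$ has finite index in the non virtually abelian group $G$, it is itself not virtually abelian, whence Lemma~\ref{polylemma} supplies a subgroup of $H$, and therefore of $G$, isomorphic to a proper Gc-group.

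Finally I would note that each of the three target groups is two-generated, so in every case the subgroup produced is finitely generated, as the statement demands. I expect the main obstacle to be the bookkeeping in the second case: one must pass to a finite-index subgroup to make the quotient by $N$ free abelian (as Proposition~\ref{polycyclic} demands) while simultaneously preserving finite generation and, crucially, the failure of virtual abelianness---this last point being exactly what makes the hypothesis of Lemma~\ref{polylemma} available.
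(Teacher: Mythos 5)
Your proposal is correct and follows essentially the same route as the paper: the same dichotomy on whether $\langle a,b\rangle$ is virtually abelian for all $a\in G$, $b\in N$, resolved by Proposition~\ref{2gen 3} in one case and by Proposition~\ref{polycyclic} plus Lemma~\ref{polylemma} in the other. The only (harmless) difference is that the paper passes to a finite-index subgroup with free abelian quotient at the outset, whereas you defer that step to the second case and instead verify Hypothesis~$(*)$ explicitly in the first.
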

\begin{proof}
By replacing $G$ by a finite index subgroup if necessary, we may assume that 
$G$ has a normal abelian subgroup $N$ such that $G/N$ is free abelian.
The result follows from Proposition \ref{polycyclic} and Lemma~\ref{polylemma}
if $\left\langle a, b \right\rangle$ is virtually abelian 
for all $a\in G$, $b\in N$, and from Proposition~\ref{2gen 3} otherwise.
\end{proof}

\section{Torsion-free soluble groups}\label{tfsolsec}

\begin{proposition}\label{schurcol} Let $G$ be a torsion-free group and $A$ an abelian subgroup of finite index in $G$.  
Then $C_G(A)$ is abelian.
\end{proposition}
\begin{proof}  Since $A$ is abelian, $A\leq Z(C_G(A))$, and  
hence $Z(C_G(A))$ has finite index in $C_G(A)$. It follows from Proposition~\ref{schur} that
$C_G(A)'$ is finite and thus in fact trivial, since $G$ is torsion-free.
\end{proof}

\begin{proposition}\label{countgen}  
Let $G$ be a countable torsion-free locally virtually abelian group  
and suppose that $G$ does not contain free abelian subgroups of arbitrarily high finite rank.
Then $G$ has a finite index normal abelian subgroup that is characteristic.
\end{proposition}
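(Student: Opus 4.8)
Write $r$ for the maximal rank of a free abelian subgroup of $G$, which is finite by hypothesis, and fix a free abelian subgroup $A=\langle a_1,\dots,a_r\rangle\cong\Z^r$ of this rank. The plan is to prove that $M:=C_G(A)$ is an abelian subgroup of finite index, and then to extract from it a \emph{characteristic} finite-index abelian subgroup. Note that $A$ itself need not have finite index in $G$ (consider $G=\Q$, where $r=1$), so $M$ rather than $A$ is the right candidate; and since $A$ need not be normal in $G$, one cannot simply realise $G/M$ as a subgroup of $\Aut(A)\cong\GL(r,\Z)$.

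The key local estimate is that $[F:C_F(A)]\le L(r)$ for every finitely generated subgroup $F$ with $A\le F\le G$, where $L$ is the Minkowski function of Proposition~\ref{GLbound}. Such an $F$ is torsion-free and virtually abelian, and its finite-index abelian subgroups have abelian centralizers by Proposition~\ref{schurcol}; a short argument (given two finite-index abelian subgroups $N_1,N_2$, the centralizer of $N_1\cap N_2$ is abelian of finite index and contains both) shows that the subgroup $U_F$ generated by all finite-index abelian subgroups of $F$ is itself abelian of finite index, and is the unique maximal such subgroup. As $F$ is finitely generated and torsion-free, $U_F\cong\Z^r$ and $U_F$ is characteristic in $F$, so $F/U_F$ embeds in $\GL(r,\Z)$ and $[F:U_F]$ divides $L(r)$ by Proposition~\ref{GLbound}. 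Finally $U_F\le C_F(A)$: any $u\in U_F$ centralizes $U_F\cap A$, which has finite index in $A\cong\Z^r$, and an automorphism of $\Z^r$ fixing a finite-index subgroup pointwise is trivial, so $u$ centralizes $A$. Hence $[F:C_F(A)]\le[F:U_F]\le L(r)$.

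I would then pass to the directed union of the finitely generated subgroups containing $A$. Any finite set of cosets of $M$ in $G$ has representatives lying in a single such $F$, and distinct cosets of $M$ restrict to distinct cosets of $C_F(A)=F\cap M$ in $F$; since $[F:C_F(A)]\le L(r)$, it follows that $[G:M]\le L(r)$. That $M=C_G(A)$ is abelian follows by the same approximation: for finitely generated $A\le F\le M$, choose a normal finite-index free abelian $B_0\lhd F$, necessarily of rank $r$; every element of $F$ centralizes $A$, hence the finite-index subgroup $A\cap B_0$ of $B_0\cong\Z^r$, hence (by the same fixed-point argument) all of $B_0$, so $B_0\le Z(F)$ and $F$ is abelian by Proposition~\ref{schur} together with torsion-freeness. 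As this holds for all such $F$, the group $M$ is abelian. I expect this transfer from a uniform bound on finitely generated subgroups to a global finite index — forced on us precisely because $A$ is not normal, so that the usual linear representation on $A$ is unavailable — to be the main obstacle; it is here that countability of $G$ is used.

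It remains to make the subgroup characteristic. Having produced one finite-index abelian subgroup $M$, I would consider the subgroup $S$ generated by all finite-index abelian subgroups of $G$. Given two such subgroups $N_1,N_2$, their intersection is finite-index abelian, so by Proposition~\ref{schurcol} its centralizer $C_G(N_1\cap N_2)$ is abelian of finite index and contains both $N_1$ and $N_2$; hence any two elements of $S$ lie in a common finite-index abelian subgroup, so $S$ is abelian of finite index and is the unique largest finite-index abelian subgroup of $G$. Being the unique subgroup of its kind, $S$ is invariant under every automorphism of $G$, and is therefore the required characteristic (in particular normal) finite-index abelian subgroup.
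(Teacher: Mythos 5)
Your proposal is correct in substance but follows a genuinely different route from the paper. The paper uses the countability hypothesis essentially: it exhausts $G$ by a chain $H_k \le H_{k+1} \le \cdots$ of finitely generated subgroups, identifies in each $H_i$ the \emph{unique} maximal rank-$k$ abelian subgroup $A_i$ (normal and self-centralising, so that $H_i/A_i$ embeds in $\GL(k,\Z)$), uses Minkowski's bound (Proposition~\ref{GLbound}) to show that the finite quotients $H_i/A_i$ eventually stabilise, and assembles a limit epimorphism $\theta\colon G \to H_m/A_m$ whose kernel $A=\bigcup_i A_i$ is the desired subgroup; characteristicness comes from $A$ being the unique maximal abelian subgroup containing $\Z^k$. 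You instead fix a single maximal-rank subgroup $A\cong\Z^r$, prove the uniform bound $[F:C_F(A)]\le L(r)$ over all finitely generated $A\le F\le G$, and transfer it to $[G:C_G(A)]\le L(r)$ by coset counting; your characteristic subgroup is the join of \emph{all} finite-index abelian subgroups of $G$. Both proofs run on the same two engines (Proposition~\ref{schurcol} and Proposition~\ref{GLbound}), but your architecture avoids the direct-limit construction entirely, and --- contrary to your own remark --- it nowhere uses countability: the coset-counting step needs only that the finitely generated subgroups containing $A$ form a directed family, which holds in any group. So your argument actually proves the proposition for arbitrary torsion-free locally virtually abelian groups without free abelian subgroups of unbounded rank; what the paper's construction buys in exchange is an explicit realisation of $G/A$ as the stabilised finite linear group $H_m/A_m$, which is not needed elsewhere.

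One step needs repair, though the repair is immediate. To show $U_F\le C_F(A)$ you argue that $u\in U_F$ centralises $U_F\cap A$, of finite index in $A\cong\Z^r$, and that an automorphism of $\Z^r$ fixing a finite-index subgroup pointwise is trivial. But conjugation by $u$ is an automorphism of $A$ only if $u$ normalises $A$, which is not known at this point ($A$ is not normal, as you yourself stress). Run the argument the other way: since $U_F\lhd F$, each $a\in A$ induces by conjugation an automorphism of $U_F\cong\Z^r$ fixing the finite-index subgroup $U_F\cap A$ pointwise, hence acting trivially; so $A\le C_F(U_F)=U_F$, and therefore $U_F\le C_F(A)$ because $U_F$ is abelian. (Alternatively, $U_F\cap A$ is a finite-index abelian subgroup of $F$, so $C_F(U_F\cap A)$ is abelian by Proposition~\ref{schurcol} and contains both $u$ and $A$.) You clearly know the issue, since later you take $B_0$ to be a \emph{normal} finite-index free abelian subgroup precisely so that conjugation induces automorphisms of it; this is a slip, not a missing idea. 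A smaller ellipsis: ``$U_F$ characteristic in $F$, so $F/U_F$ embeds in $\GL(r,\Z)$'' requires $C_F(U_F)=U_F$, which does follow from your uniqueness claim together with Proposition~\ref{schurcol}, and is worth stating, as the paper does for its $A_i$.
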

\begin{proof}
Let $k\in \N$ be maximal such that $G$ has a free abelian subgroup of rank $k$.  
We can choose elements $g_i \in G$ for $i \in \N$ such that
 $G = \langle g_i \mid i\in \N\rangle$ and $\langle g_1,\ldots,g_k\rangle
\cong \Z^k$.

For $i\geq k$, let $H_i = \langle g_1,\ldots,g_i\rangle$ and let $A_i$ be a
subgroup of $H_i$ that is maximal subject to having finite index in $H_i$ and
being isomorphic to $\Z^k$.  
By Proposition \ref{schurcol}, $C_{H_i}(A_i)$ is abelian, and so $C_{H_i}(A_i) = A_i$.
 
Suppose $B_i$ is another rank $k$ abelian subgroup of $H_i$.  
Then $A_i\cap B_i$ has finite index in $B_i$, so has rank $k$, so has finite index in $A_i$ and hence in $H_i$.  
Proposition \ref{schurcol} implies that $C_{H_i}(A_i\cap B_i)$ is abelian.  
But $A_i,B_i\leq C_{H_i}(A_i \cap B_i)$, and so by the maximality of $A_i$, we have $C_{H_i}(A_i\cap B_i) = A_i$, so $B_i\leq A_i$.  
This proves that $A_i$ is the \emph{unique} maximal abelian subgroup of rank $k$ in $H_i$.  
It follows that $A_i\unlhd H_i$ and $A_i\leq A_{i+1}$ for all $i\geq k$.
Since $A_i$ is self-centralising, $H_i/A_i$ is isomorphic to a subgroup of $\Aut(A_i) \cong \GL(k,\Z)$.

For $i\geq k$, let $\iota_i$ be the natural embedding of $H_i$ in $H_{i+1}$, $\phi_i$ the natural homomorphism from $H_i$ to $H_i/A_i$ 
and let $\psi_i:H_i/A_i \rightarrow H_{i+1}/A_{i+1}$ be given by $xA_i \mapsto xA_{i+1}$.
Then $\psi_i\circ \phi_i =  \phi_{i+1}\circ\iota_i$
and so $\ker \psi_i\circ \phi_i = \ker \phi_{i+1}\circ\iota_i$.  Now
\[\ker \phi_{i+1}\circ\iota_i = H_i\cap \ker \phi_{i+1} = H_i\cap A_{i+1} = A_i,\]
where the last equality follows from the maximality of $A_i$.
Thus $\ker \psi_i\circ \phi_i = A_i$.  Since already $\ker \phi_i = A_i$, this implies that $\psi_i$ is injective.

Hence, since each $H_i/A_i$ is isomorphic to a finite subgroup of $\GL(m,\Z)$, Proposition \ref{GLbound} 
implies that there exists $m\in \N$ such that $H_i/A_i$ is isomorphic to $H_m/A_m$ for all $i\geq m$.

We have a commutative diagram:
\[\begin{CD}
H_m          @>\iota_m>>   H_{m+1}           @>\iota_{m+1}>> \ldots  @>\iota_{i-1}>>   H_i        @>\iota_i>>  H_{i+1}\\
@VV\phi_m V\               @VV\phi_{m+1} V                   @VV V                                @VV\phi_i V @VV\phi_{i+1} V\\
H_m/A_m      @>\psi_m>>    H_{m+1}/A_{m+1}   @>\psi_{m+1}>>  \ldots  @>\psi_{i-1}>>    H_i/A_i    @>\psi_i>> H_{i+1}/A_{i+1}\\
\end{CD}\]

For $i\geq m$, define $\theta_i: H_i\rightarrow H_m/A_m$ by $\theta_m = \phi_m$ and for $i>m$ 
\[\theta_i = \psi_m^{-1}\circ\psi_{m+1}^{-1}\ldots\circ\psi_{i-1}^{-1}\circ \phi_i.\]
Since each $\phi_i$ is an epimorphism and all the  $\psi_j^{-1}$ ($j\geq m$) are isomorphisms, 
each $\theta_i$ is an epimorphism. Moreover,  we have $\theta_{i+1} \circ 
\iota_i = \theta_i$; that is, $\theta_{i+1}(x) = \theta_i(x)$ for $x \in H_i$.

Define $\theta: G\rightarrow H_m/A_m$ by $\theta(x) = \theta_i(x)$ where $i = \min\{j\geq m \mid x\in H_j\}$.  
Then $\theta$ is a well defined epimorphism with kernel $A := \bigcup_{i\in \N} A_i$.
So $A$ is abelian of finite index in $G$.

Finally, we show that $A$ is characteristic in $G$.
Let $B$ be an abelian subgroup of $G$ containing $\Z^k$, and choose $h_i$
such that $B = \left\langle h_i \mid i\in \N \right\rangle$. 
Put $B_i =\langle h_1,\ldots,h_i\rangle$ for $i \ge k$ and assume that $B_k \cong \Z^k$. 
Then each $B_i$ is contained in some $H_{j_i}$, and hence in $A_{j_i}$.  So
$ B = \bigcup_{i\in \N} B_i \leq \bigcup_{i\in \N} A_{j_i} \leq \bigcup_{i\in \N} A_i = A.  $ 
Thus $A$ is the unique maximal abelian subgroup of $G$ containing $\Z^k$ and so is characteristic in $G$.
\end{proof}

\begin{lemma}\label{Zinf-tf}
Let $G$ be a countable torsion-free locally virtually abelian group and
let $N$ be an abelian normal subgroup of $G$ such that $G/N\cong \Z^\infty$.
Then $G$ has a subgroup isomorphic to $\Z^\infty$.
\end{lemma}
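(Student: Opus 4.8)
The plan is to induct on nothing fancy but instead to split on the torsion-free rank of $N$. Since $G$ is torsion-free, $N$ is a torsion-free abelian group; write $r = \mathrm{rank}(N) = \dim_\Q(N\otimes\Q)$. If $r$ is infinite I am already done: choosing $n_1,n_2,\ldots \in N$ whose images in $N\otimes\Q$ are $\Q$-linearly independent, any finite $\Z$-relation among the $n_i$ would give a $\Q$-relation among their images, so $\langle n_1,n_2,\ldots\rangle \cong \Z^\infty$. I record this observation, since it will be reused, and then assume for the rest that $r=d$ is finite.

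Now I would study the conjugation action. Because $N$ is abelian it centralises itself, so conjugation gives a homomorphism $\rho\colon G/N \to \Aut(N)$, and as $N$ has finite rank $d$ every automorphism extends uniquely to $N\otimes\Q\cong\Q^d$, giving $\Aut(N)\hookrightarrow \GL(d,\Q)$. The key claim is that $\rho(\pi(g))$ has finite order for every $g\in G$, where $\pi\colon G\to G/N$. To prove it, fix $g$ and pick $n_1,\ldots,n_d\in N$ spanning $N\otimes\Q$. Each $\langle g,n_j\rangle$ is finitely generated, hence virtually abelian and so Noetherian, so the $g$-invariant subgroup $\langle n_j^{g^i}\mid i\in\Z\rangle$ is finitely generated; summing over $j$ yields a finitely generated, $g$-invariant, full-rank subgroup $M\le N$. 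Then $\langle M,g\rangle$ is finitely generated and so virtually abelian; let $A$ be an abelian normal subgroup of finite index (replace any finite-index abelian subgroup by its core). A suitable power $g^k$ lies in $A$, and $M\cap A$ has finite index in $M$, hence is again of full rank $d$. Since $g^k$ and $M\cap A$ both lie in the abelian group $A$, $g^k$ centralises $M\cap A$; as $M\cap A$ spans $N\otimes\Q$, conjugation by $g^k$ is trivial on all of $N$, giving $\rho(\pi(g))^k = 1$.

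It follows that $\mathrm{im}\,\rho$ is a torsion group and so has rank $0$. Since $\mathrm{rank}(G/N)=\infty$ and rank is additive across $1\to \ker\rho\to G/N\to \mathrm{im}\,\rho\to 1$, the kernel $\ker\rho = C_G(N)/N$ has infinite rank. Set $C=C_G(N)$. Then $N\le Z(C)$ and $C/N$ is abelian, so $C'\le N\le Z(C)$ and $C$ is nilpotent of class at most $2$; in particular commutators in $C$ are bilinear, so $[u,v]^{m^2}=[u^m,v^m]$ for all $u,v\in C$ and $m\ge1$. For any $u,v\in C$, $\langle u,v\rangle$ is virtually abelian, so a suitable power $u^m,v^m$ lies in a common abelian subgroup and commutes; then $[u,v]^{m^2}=1$, and torsion-freeness forces $[u,v]=1$. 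Thus $C$ is abelian. Being torsion-free abelian of rank $\ge \mathrm{rank}(C/N)=\infty$, it contains a copy of $\Z^\infty$ by the first paragraph, which completes the proof.

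The heart of the argument, and the step I expect to need the most care, is the finite-order claim of the second paragraph: it is exactly here that local virtual abelianness is used to exclude the ``rich'' actions (hyperbolic or non-semisimple elements of $\GL(d,\Z)$, which would produce Baumslag--Solitar- or Heisenberg-type subgroups) and thereby force all of the infinite rank into the centraliser $C_G(N)$. The remainder is bookkeeping with ranks of torsion-free abelian groups together with the class-$2$ commutator identity.
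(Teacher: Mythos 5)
Your proof is correct, and while it shares the paper's two crucial ingredients, it is organised along a genuinely different route. The paper writes $G=\bigcup_{i\in\N} G_i$ as an increasing union of finitely generated (hence virtually abelian) subgroups, each containing a fixed copy of $\Z^t\leq N$ where $t$ is the (finite) rank of $N$, shows that each centraliser $A_i=C_{G_i}(G_i\cap N)$ has finite index in $G_i$ and is abelian, and takes the desired subgroup inside the increasing union $\bigcup_i A_i$. You instead work globally: you prove that the conjugation representation $\rho\colon G/N\to\Aut(N)\leq\GL(d,\Q)$ has torsion image, so by rank additivity all of the infinite rank of $G/N$ is concentrated in $\ker\rho=C_G(N)/N$, and then show $C_G(N)$ is abelian. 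The abelianness step is the same trick in both proofs (the class-$2$ identity $[u^m,v^m]=[u,v]^{m^2}$ combined with local virtual abelianness and torsion-freeness; the paper leaves it implicit in the sentence ``but then $\langle x,y\rangle$ is not virtually abelian''), as is the reduction to $N$ of finite rank. What the paper's filtration buys is that every group in sight is finitely generated, so Noetherian and finite-index facts are immediate and the argument never leaves the finitely generated world; what your version buys is a cleaner structural conclusion --- a canonical abelian subgroup $C_G(N)$ of infinite rank, independent of any chosen filtration --- at the cost of handling a possibly non-finitely-generated $N$ via $N\otimes\Q$ and of needing rank additivity for abelian groups.

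One remark: the Noetherian detour in your second paragraph is unnecessary. You never use the $g$-invariance of $M$; taking $M=\langle n_1,\ldots,n_d\rangle$ directly, the group $\langle M,g\rangle$ is already finitely generated, and the rest of your argument (pass to the core $A$, find $g^k\in A$, note $M\cap A$ has full rank, conclude $g^k$ centralises $N$) goes through verbatim.
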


(We recall that $\Z^\infty$ denotes a free abelian group of countably infinite rank.)

\begin{proof}  
If $N$ contains $\Z^k$ for all $k\in \N$ then we are done, since $N$ is abelian, so
assume $t$ is maximal with $\Z^t\leq N$.
Write $G = \bigcup_{i\in \N} G_i$, with each $G_i$ finitely generated (and hence virtually abelian),
$G_i < G_{i+1}$ for all $i\in \N$, and each $N_i := G_i \cap N$ isomorphic to $\Z^t$. 

Let $A_i = C_{G_i}(N_i)$ and let $B_i$ be an abelian subgroup of finite index in $G_i$.
Then $B_i$ centralises the finite index subgroup $B_i \cap N_i$ of $N_i$.
It is easily seen that an automorphism of a free abelian group of finite rank
that centralises a subgroup of finite index must be trivial, so
$B_i$ must centralise $N_i$. Thus $B_i\leq A_i$, so ${|G_i:A_i|}$ is finite.
Each $N_i$ has finite index in $N_{i+1}$, so 
for all $i$ we have $A_i\leq C_{G_{i+1}}(N_{i+1}) = A_{i+1}$.

Also $A_i$ must be abelian, since otherwise there exist $x,y\in A_i$ with
$[x,y] = z\in N_i\setminus \{1\}$, but then $\langle x,y\rangle$ is not virtually abelian, contrary to hypothesis.
Thus $\bigcup_{i\in \N} A_i$ is abelian and contains finitely generated
subgroups of arbitrarily high rank, so it must contain $\Z^\infty$.
\end{proof}

\begin{lemma}\label{Z^inf}
Let $G$ be a locally virtually abelian group with a normal torsion subgroup
$T$ such that $G/T\cong \Z^\infty$.  
Then $G$ has a subgroup isomorphic to $\Z^\infty$.
\end{lemma}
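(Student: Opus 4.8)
The plan is to exhibit a subgroup isomorphic to $\Z^\infty$ directly, building it as an ascending union of finite-rank free abelian subgroups. The guiding observation is that any torsion-free subgroup $P\le G$ is automatically free abelian: since $G/T$ is torsion-free, $T$ contains every torsion element of $G$, so $P\cap T=1$ and the quotient map restricts to an embedding $P\hookrightarrow G/T\cong\Z^\infty$; a subgroup of a free abelian group is free abelian. Hence it suffices to build an ascending chain $P_1\le P_2\le\cdots$ with $P_n\cong\Z^n$ and $P_n\cap T=1$. The union $P=\bigcup_n P_n$ then satisfies $P\cap T=1$, so $P$ embeds in $\Z^\infty$ and is free abelian, and as its rank is infinite we get $P\cong\Z^\infty$.

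I would construct the chain inductively, the only real content being the extension step. Suppose $P_n\cong\Z^n$ with $P_n\cap T=1$ has been found. Since $G/T\cong\Z^\infty$ has infinite rank I can choose $g\in G$ whose image in $G/T$ is independent of the (rank $n$) image of $P_n$, and set $H=\langle P_n,g\rangle$. Then $H$ is finitely generated, hence virtually abelian and virtually polycyclic, so $\tau:=H\cap T$, being a finitely generated torsion subgroup, is finite, normal, and equal to the set of torsion elements of $H$. The \emph{key point} is that $H/\tau\cong HT/T$ embeds in $G/T\cong\Z^\infty$ and is therefore free abelian, necessarily of rank $n+1$. Now take a normal free abelian subgroup $B\le H$ of finite index (for instance the normal core of the free part of a finite-index abelian subgroup); then $B\cap\tau=1$ and $B$ has rank $n+1$. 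Because $H/\tau$ is abelian, for all $b\in B$, $h\in H$ we have $[b,h]\in\tau$, while normality of $B$ gives $[b,h]\in B$; hence $[b,h]\in B\cap\tau=1$, and so in fact $B\le Z(H)$.

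With $B$ central, $E:=\langle P_n,B\rangle$ is a finitely generated \emph{abelian} group (the product of the commuting abelian subgroups $P_n$ and $B$), of rank $n+1$, whose torsion subgroup $E\cap T$ meets $P_n$ trivially. It is then a routine matter about finitely generated abelian groups to extract the required subgroup: the map $E\to E/(E\cap T)$ embeds $P_n$ as a rank-$n$ subgroup of a free abelian group, I enlarge its image by one independent element to a rank-$(n+1)$ free subgroup, and lift back through a section of $E\to E/(E\cap T)$ chosen to restrict to the identity on $P_n$ (such a section exists because the target is free abelian). This produces $P_{n+1}\supseteq P_n$ with $P_{n+1}\cong\Z^{n+1}$ and $P_{n+1}\cap T=1$, completing the induction.

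The step I expect to be the main obstacle is exactly this extension step, where nesting ($P_{n+1}\supseteq P_n$) and torsion-freeness ($P_{n+1}\cap T=1$) must be secured simultaneously: adjoining a new independent direction could a priori introduce torsion lying in $T$, or fail to commute with the previously chosen generators. Both difficulties are dissolved by the observation that $H/\tau$ is free abelian, which forces the finite-index subgroup $B$ to be central and thereby reduces the whole construction to linear algebra over $\Z$ inside the finitely generated abelian group $\langle P_n,B\rangle$. Everything else (finiteness of $\tau$, existence of the normal free abelian $B$, the splitting of $P_n$ off as a summand) is standard for finitely generated virtually abelian groups.
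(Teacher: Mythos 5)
Your overall strategy is sound and is essentially the paper's: both proofs build an ascending chain of subgroups $P_n\cong\Z^n$ meeting $T$ trivially and take the union (the paper's chain $H_i\le G_i=\langle g_1,\ldots,g_i\rangle$ uses an irredundant generating set modulo $T$). The real difference is the mechanism of the inductive step. The paper adjoins a suitable \emph{power} of the new generator: since $[g_{i+1},h]\in T_{i+1}$ for $h\in H_i$ and $T_{i+1}$ is finite, a power $g_{i+1}^l$ with $l=k|T_{i+1}|$, where $g_{i+1}^k$ centralises $T_{i+1}$, centralises all of $H_i$, so one can take $H_{i+1}=\langle H_i,g_{i+1}^l\rangle$. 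You instead manufacture a finite-index normal free abelian $B\le H$ and prove it is central via $[B,H]\subseteq B\cap\tau=1$ (a correct and pleasant argument), then work inside the abelian group $E=\langle P_n,B\rangle$. Also, your union step needs the fact that a subgroup of a free abelian group is free abelian, whereas the paper's chain is a chain of direct summands, making the union visibly $\Z^\infty$; your version is fine too.

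There is, however, one genuine flaw: the justification of your final lifting step. You assert that a section of $E\to E/(E\cap T)$ restricting to the identity on $P_n$ exists ``because the target is free abelian''. Freeness of the target guarantees that \emph{some} section exists, not one with prescribed values on a given subgroup. Concretely, take $E=\Z x\oplus\Z y\oplus\langle t\rangle$ with $|t|=2$ and $P_1=\langle 2x+t\rangle$: then $P_1\cong\Z$, $P_1\cap\langle t\rangle=1$ and $E/\langle t\rangle\cong\Z^2$ is free, yet every section $\sigma$ satisfies $\sigma(2\bar{x})=2\sigma(\bar{x})\in 2E$, while $2x+t\notin 2E$, so no section restricts to the identity on $P_1$. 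The obstruction is purity: a torsion-free subgroup need not lie in any complement of the torsion. What actually rescues your step is not freeness of the target but your choice of $g$: since the image $\bar{g}$ is independent of $\pi(P_n)$, one has $H/\tau=\pi(P_n)\oplus\langle\bar{g}\rangle$, hence $\pi(E)/\pi(P_n)$ embeds in $\Z$ and is free, so $\pi(P_n)$ is a direct summand of $E/(E\cap T)$ and a section can be defined summand by summand. Simpler still, avoid sections altogether: choose $e\in E$ (indeed $e\in B$ will do) whose image has infinite order modulo $\pi(P_n)$ and set $P_{n+1}=\langle P_n,e\rangle$; if an element $p+me$ ($p\in P_n$, $m\in\Z$) is torsion, applying $\pi$ gives $\pi(p)+m\pi(e)=0$, forcing $m=0$ and then $p=0$, so $P_{n+1}$ is torsion-free of rank $n+1$. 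With either repair your proof is complete; as written, the quoted justification is a false general principle.
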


\begin{proof}
Let  $g_1,g_2,\ldots$ be an irredundant generating set for $G$ modulo $T$.  
Then for all $i\in \N$, $G_i := \langle g_1,\ldots,g_i\rangle$ is virtually abelian, 
so $T_i := G_i\cap T$ is finite.  Also $G_i/T_i\cong \Z^i$.

We shall construct a chain of subgroups $H_1 < H_2 < \cdots < H_i <  \cdots$
such that $H_i \le G_i$ with ${|G_i:H_i|}$ finite  and $H_i \cong \Z^i$.
Suppose that we have defined $H_1, \ldots,H_i$ for some $i \ge 0$.
Then $g_{i+1} \in G_{i+1}$ centralises $H_iT_{i+1}/T_{i+1}$, 
while some power $g_{i+1}^k$ of $g_{i+1}$ centralises $T_{i+1}$.  
If $l = k|T_{i+1}|$, then $g_{i+1}^l$ centralises $H_i$.
Thus $H_{i+1} := \langle H_i, g_{i+1}^l\rangle$ 
has finite index in $G_{i+1}$ and is isomorphic to $\Z^{i+1}$.
So we can construct the chain as claimed, and
$\bigcup_{i\in \N} H_i \cong \Z^\infty$.
\end{proof}

\begin{proposition}\label{Zinf-sol}
Let $G$ be a countable soluble locally virtually abelian group, and suppose 
that $G$ has subgroups isomorphic to $\Z^k$ for all $k\in \N$.
Then $G$ has a subgroup isomorphic to $\Z^\infty$.
\end{proposition}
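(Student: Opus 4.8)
The plan is to argue by induction on the derived length $d$ of $G$, reducing at each step to the two pull-back lemmas already established. The base case is that $G$ is abelian. Here I would record a fact that will be reused: a countable abelian group of infinite torsion-free rank contains $\Z^\infty$. Indeed, if $A$ is such a group with torsion subgroup $T_0$, then $A/T_0$ is torsion-free of infinite rank, so a maximal independent subset of $A/T_0$ (which exists by Zorn's Lemma) must be infinite; choosing independent elements $\bar x_1,\bar x_2,\ldots$ and lifting to $x_i\in A$, any $\Z$-relation among the $x_i$ projects to one among the $\bar x_i$ and so is trivial, whence $\langle x_1,x_2,\ldots\rangle\cong\Z^\infty$. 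Since a countable abelian group containing $\Z^k$ for all $k$ has infinite torsion-free rank, this settles the base case.

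For the inductive step, suppose $d\ge 2$ and put $A=G^{(d-1)}$, the last non-trivial term of the derived series; this is an abelian normal (indeed characteristic) subgroup, and $\bar G:=G/A$ is again countable, soluble and locally virtually abelian, of derived length exactly $d-1$. I would then split according to the torsion-free rank of $\bar G$. If $\bar G$ does \emph{not} contain $\Z^k$ for all $k$, say its free abelian subgroups have rank at most $n$, then for each large $m$ a copy of $\Z^m\le G$ has image of rank at most $n$ in $\bar G$, so $\Z^m\cap A$ is free abelian of rank at least $m-n$. Hence $A$ has free abelian subgroups of unbounded rank, and the base-case fact gives $\Z^\infty\le A\le G$.

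In the remaining case $\bar G$ contains $\Z^k$ for all $k$, so by the inductive hypothesis $\bar G$ has a subgroup isomorphic to $\Z^\infty$. Let $K$ be its preimage in $G$, so that $A\lhd K$ with $K/A\cong\Z^\infty$. The key observation is that, since $K/A$ is torsion-free, every torsion element of $K$ lies in $A$; thus the torsion elements of $K$ are precisely the torsion subgroup $T$ of the abelian group $A$, which is therefore a normal torsion subgroup of $K$ with $K/T$ torsion-free. Now $K/T$ is a countable torsion-free locally virtually abelian group with abelian normal subgroup $A/T$ and $(K/T)/(A/T)\cong\Z^\infty$, so Lemma~\ref{Zinf-tf} produces a copy of $\Z^\infty$ inside $K/T$. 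Taking its preimage $L$ in $K$, I obtain a normal torsion subgroup $T\lhd L$ with $L/T\cong\Z^\infty$, and Lemma~\ref{Z^inf} then yields $\Z^\infty\le L\le G$, completing the induction.

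I expect the main obstacle to be the torsion occurring in the inductive step: a priori the torsion elements of $G$ need not form a subgroup, yet the two available lemmas demand either a torsion-free group or a clean normal torsion subgroup with free abelian quotient. The point that unlocks the argument is that passing to the preimage $K$ of a $\Z^\infty$ forces all torsion of $K$ into $A$, so that a single subgroup $T=\mathrm{torsion}(A)$ simultaneously makes $K/T$ torsion-free (for Lemma~\ref{Zinf-tf}) and serves as the normal torsion kernel with free abelian quotient (for Lemma~\ref{Z^inf}); chaining the two lemmas finishes the proof. The routine points — that subgroups and quotients of $G$ remain countable, soluble and locally virtually abelian, and the elementary rank bookkeeping in the bounded-rank case — I would leave to the reader.
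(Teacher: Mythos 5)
Your proposal is correct and follows essentially the same route as the paper's proof: induction on derived length using the last term $G^{(d-1)}$ of the derived series, a rank-counting case split between that abelian bottom and the quotient, and then the same chain of Lemma~\ref{Zinf-tf} followed by Lemma~\ref{Z^inf} applied with $T$ the torsion subgroup of the abelian normal subgroup. The only differences are presentational: you spell out the abelian base case and the rank bookkeeping that the paper leaves implicit, and you phrase the case split via the quotient's rank rather than the subgroup's, which is equivalent.
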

\begin{proof}
The proof is by induction on the derived length of $G$.
The statement is true for abelian groups.  Now suppose it is true for groups of
derived length at most $n$, and let $G$ be locally virtually abelian
of derived length $n+1$, with $\Z^k\leq G$ for all $k\in \N$.

Let $N = G^{(n)}$.  If $\Z^k\leq N$ for all $k\in \N$ then we are done, since $N$ is abelian.
So suppose there exists $t\in \N$ maximal such that $\Z^t\in N$.
This means that $G/N$ must contain $\Z^k$ for all $k\in \N$.
By the induction hypothesis, $G/N$ has a subgroup isomorphic to $\Z^\infty$.
We can assume without loss of generality that $G/N\cong \Z^\infty$.

Let $T$ be the torsion subgroup of $N$.  Then $N/T$ is torsion-free, and so $G/T$ and $N/T$
satisfy the hypothesis of Lemma~\ref{Zinf-tf}.  Thus we can replace $G$ by a subgroup and
assume $G/T\cong \Z^\infty$.  Then by Lemma~\ref{Z^inf}, $G$ has a subgroup isomorphic to
$\Z^\infty$.
\end{proof}

We can now prove our main result for torsion-free finitely generated soluble
groups.

\begin{theorem}\label{tfsol}  Let $G$ be a finitely generated torsion-free soluble group.
Then at least one of the following holds:
\begin{enumerate}
\item $G$ is virtually abelian;
\item $G'$ has a subgroup isomorphic to $\Z^\infty$;
\item $G$ has a subgroup isomorphic to a proper Gc-group.
\end{enumerate}
\end{theorem}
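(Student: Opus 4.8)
The plan is to induct on the derived length $d$ of $G$. If $d\le 1$ then $G$ is abelian, so case~(i) holds; assume $d\ge 2$ and that the theorem is known for all finitely generated torsion-free soluble groups of derived length less than $d$.

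The key to the induction is the elementary observation that $K'\le G'$ for every subgroup $K\le G$, together with the fact that the derived subgroup of $\Z\wr\Z$ is free abelian of countably infinite rank: indeed $(\Z\wr\Z)'$ is the augmentation submodule $\{(n_i)\mid \sum_i n_i=0\}$ of the base group, which is isomorphic to $\Z^\infty$. Suppose first that $G'$ is \emph{not} locally virtually abelian, so that $G'$ contains a finitely generated subgroup $K$ that is not virtually abelian. Then $K$ is finitely generated, torsion-free and soluble of derived length at most $d-1$, so the inductive hypothesis applies to $K$, and since $K$ is not virtually abelian it must satisfy case~(ii) or case~(iii). As $K\le G'$ we have $K'\le G''\le G'$, so if $K'$ contains a copy of $\Z^\infty$ then so does $G'$ (case~(ii)), while a proper Gc-subgroup of $K$ is also a subgroup of $G$ (case~(iii)). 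Hence I may assume from now on that $G'$ is locally virtually abelian.

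Now $G'$ is a countable torsion-free soluble locally virtually abelian group. If $G'$ contains a copy of $\Z^k$ for every $k\in\N$, then Proposition~\ref{Zinf-sol} produces a subgroup of $G'$ isomorphic to $\Z^\infty$, giving case~(ii). Otherwise the free abelian subgroups of $G'$ have bounded rank, and Proposition~\ref{countgen} supplies a finite-index abelian subgroup $B$ of $G'$ that is characteristic in $G'$; since $G'\lhd G$, this gives $B\lhd G$. Then $G/B$ is finitely generated with finite derived subgroup $G'/B$, and is therefore virtually abelian (a finitely generated group with finite derived subgroup is virtually abelian, by an argument using Proposition~\ref{schur}). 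Let $D$ be the preimage in $G$ of a finite-index abelian subgroup of $G/B$. Then $D$ has finite index in $G$, is finitely generated and torsion-free, contains $B$, and satisfies $D'\le B$; thus $D$ is metabelian. Applying Theorem~\ref{metabelian} to $D$, and noting that $D$ has no $C_p\wr\Z$ subgroup because it is torsion-free, we conclude that either $D$ is virtually abelian, in which case $G$ is virtually abelian and case~(i) holds, or $D$ contains a copy of $\Z\wr\Z$, whence $\Z^\infty\cong(\Z\wr\Z)'\le D'\le G'$ gives case~(ii), or $D$ contains a proper Gc-group, giving case~(iii).

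I expect the main obstacle to be the locally-virtually-abelian branch, and in particular the bounded-rank subcase: the work lies in converting the conclusion of Proposition~\ref{countgen} that $G'$ is virtually abelian into a finite-index \emph{metabelian} subgroup $D$ of $G$ to which Theorem~\ref{metabelian} can be applied. This requires checking that the finite-index abelian subgroup $B$ of $G'$ is normal in $G$ (which follows from its being characteristic in the normal subgroup $G'$) and that $G/B$ is virtually abelian. By contrast, the non-locally-virtually-abelian branch is a clean application of the inductive hypothesis inside $G'$, exploiting only that $K'\le G'$.
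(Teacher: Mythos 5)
Your proof is correct and follows essentially the same route as the paper's: induction on derived length, reduction to the case that $G'$ is locally virtually abelian by applying the inductive hypothesis to finitely generated subgroups of $G'$, then the dichotomy between Proposition~\ref{Zinf-sol} (free abelian subgroups of unbounded rank) and Proposition~\ref{countgen} (bounded rank), the latter yielding a normal abelian subgroup $B\lhd G$ with $G/B$ virtually abelian, hence a finite-index metabelian subgroup to which Theorem~\ref{metabelian} applies. The only difference is that you make explicit two steps the paper leaves implicit, namely how the induction is applied to the non-finitely-generated group $G'$, and the observation that a $\Z\wr\Z$ subgroup forces $\Z^\infty\cong(\Z\wr\Z)'\le G'$, which is needed to convert the $\Z\wr\Z$ case of Theorem~\ref{metabelian} into conclusion (ii).
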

\begin{proof}

Let $n$ be the derived length of $G$.  The proof is by induction on $n$.
The statement is vacuously true for $n=1$, and true for $n=2$ ($G$ metabelian) 
by Theorem \ref{metabelian}. So assume that $n \ge 2$ and that the result is true for
groups with derived length less than $n$. By applying the result inductively
to $G'$ we may assume $G'$ is locally virtually abelian.

We may assume that $G'$ does not contain $\Z^k$ for all $k\in \N$, since otherwise we are done by Proposition~\ref{Zinf-sol}.
Since $G'$ is countable, Proposition \ref{countgen} shows that $G'$ 
has a finite index normal abelian subgroup $A$ which is characteristic in $G'$ and hence $A \lhd G$.  
Now $G/A$ is finitely generated with $(G/A)/(G'/A)\cong G/G'$ abelian, while $G'/A$ is finite.  So $G/A$ is virtually abelian.
But since $A$ is an abelian normal subgroup of $G$, this implies that $G$ itself is virtually metabelian, and the result follows
from Theorem~\ref{metabelian}.
\end{proof}

\section{The main result}\label{mainsec}

We require just one further lemma before proceeding to prove our result for finitely generated soluble groups in general.

\begin{lemma}\label{(G/N)'}
Let $G$ be a group with normal subgroup $N$ and let $H$ be a subgroup of $(G/N)'$.  
Then $G'$ has a subgroup $K$ such that ${K/(N\cap K)}\cong H$.
\end{lemma}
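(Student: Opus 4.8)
The plan is to realise $H$ as a homomorphic image of a suitable subgroup of $G'$ under the quotient map, and then read off the conclusion from the first isomorphism theorem. Let $\pi\colon G\to G/N$ be the natural projection. Since $\pi$ is surjective, it carries the derived subgroup of $G$ onto that of $G/N$; that is, $\pi(G') = (G/N)'$, so the restriction $\pi|_{G'}\colon G'\to (G/N)'$ is an epimorphism. This single observation is what the whole argument rests on, and it is just the standard fact that $\phi(G') = \phi(G)'$ for any surjective homomorphism $\phi$ (here applied to $\phi = \pi$).

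Granting this, I would set $K = (\pi|_{G'})^{-1}(H)$, the full preimage of $H$ inside $G'$; by construction $K \le G'$, as required. Because $\pi|_{G'}$ maps onto all of $(G/N)'$ and $H \le (G/N)'$, every element of $H$ has a preimage lying in $G'$, and hence in $K$, so $\pi(K) = H$ (not merely $\pi(K)\le H$). The kernel of the further restriction $\pi|_K$ is $K\cap \ker\pi = K\cap N = N\cap K$. The first isomorphism theorem then gives $K/(N\cap K)\cong \pi(K) = H$, which is exactly the desired statement.

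I do not anticipate any genuine obstacle here: the entire content of the lemma is packaged in the identity $\pi(G') = (G/N)'$, and the remainder is routine bookkeeping of kernels and images. The only point meriting a moment's care is the verification that $\pi(K) = H$ rather than a proper subgroup, and this is secured precisely by the surjectivity of $\pi|_{G'}$ onto $(G/N)'$.
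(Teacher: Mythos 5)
Your proposal is correct and rests on exactly the same idea as the paper's one-line proof: the paper writes $(G/N)' = G'N/N \cong G'/(G'\cap N)$, which is precisely your observation that $\pi(G') = (G/N)'$ followed by isomorphism-theorem bookkeeping (the paper packages it via the second isomorphism theorem, you via the preimage $K=(\pi|_{G'})^{-1}(H)$ and the first). The two arguments are the same in substance, yours merely making the choice of $K$ explicit.
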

\begin{proof}
This follows from $(G/N)' = G'N/N \cong G'/(G' \cap N)$.
\end{proof}

\begin{theorem}\label{main}
Let $G$ be a finitely generated soluble group.
Then at least one of the following holds:
\begin{enumerate}
\item $G$ is virtually abelian;
\item $G'$ has a subgroup isomorphic to $\Z^\infty$;
\item $G$ has a subgroup isomorphic to a proper Gc-group; 
\item $G$ has a finitely generated subgroup $H$ with an infinite
normal torsion subgroup $U$, such that $H/U$ is
either free abelian or a proper Gc-group.
\end{enumerate}
\end{theorem}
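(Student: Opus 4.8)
The plan is to argue by induction on the derived length $n$ of $G$. The statement is vacuous for $n=1$, and for $n=2$ it follows from Theorem~\ref{metabelian}: a finitely generated metabelian group that is not virtually abelian contains a copy of $\Z\wr\Z$, of $C_p\wr\Z$, or of a proper Gc-group, and these land in cases (ii) (because $(\Z\wr\Z)'$ contains $\Z^\infty$), (iv) (because $C_p\wr\Z$ has the infinite normal torsion subgroup $\bigoplus_{\Z}C_p$ with free abelian quotient) and (iii) respectively. So suppose $n\ge 3$ and that the theorem holds for soluble groups of smaller derived length. Applying the inductive hypothesis to each finitely generated subgroup $K$ of $G'$ and observing that $K\le G$ and $K'\le G'$, every one of the four outcomes for $K$ pulls back to the corresponding outcome for $G$; hence we may assume that $G'$ is locally virtually abelian. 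If $G'$ contains $\Z^k$ for all $k$ then Proposition~\ref{Zinf-sol} gives $\Z^\infty\le G'$ and we are in case (ii), so we may further assume that $G'$ has bounded rank.

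Let $A=G^{(n-1)}$ be the last non-trivial term of the derived series; it is abelian and characteristic, so $A\lhd G$, and its torsion elements form a normal subgroup $T$. For any $a\in G$ and $b\in T$ the subgroup $\langle b^{a^i}\mid i\in\Z\rangle\le A$ is abelian, so if $\langle a,b\rangle$ is not virtually abelian then it satisfies Hypothesis~$(*)$ with $b$ a torsion element, and Proposition~\ref{2gen tor} produces a subgroup isomorphic to $C_p\wr\Z$ — that is, case (iv). We may therefore assume that $\langle a,b\rangle$ is virtually abelian for all $a\in G$ and $b\in T$, which (using that $A$ has bounded rank) restricts $T$ to a finite reduced part together with a divisible part that meets every finitely generated subgroup in a finite group.

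The main step is then to apply the inductive hypothesis to $\bar G=G/A$, which has derived length $n-1$, and to pull each conclusion back through the abelian normal subgroup $A$. If $\bar G$ is virtually abelian, choose a finite index subgroup $G_1\le G$ with $A\le G_1$ and $G_1/A$ abelian; then $G_1'\le A$ is abelian, so $G_1$ is a finitely generated metabelian group and Theorem~\ref{metabelian} finishes this case. If $(\bar G)'$ has a subgroup isomorphic to $\Z^\infty$, then Lemma~\ref{(G/N)'} (with $N=A$) yields $K\le G'$ with $K/(A\cap K)\cong\Z^\infty$, and Lemma~\ref{Zinf-tf} (after factoring out the torsion of $A\cap K$) together with Lemma~\ref{Z^inf} produces $\Z^\infty\le G'$, i.e.\ case (ii). If $\bar G$ has a proper Gc-subgroup, lift its two generators to $H=\langle x,y\rangle\le G$ and set $V=H\cap A$, so that $H/V$ is a proper Gc-group: when $V$ is finite, Lemma~\ref{GBSquot} gives a proper Gc-subgroup of $H$ and hence case (iii); when $V$ is torsion-free, $H$ is torsion-free by Lemma~\ref{J} and Theorem~\ref{tfsol} applied to $H$ gives case (ii) or (iii); and when $V$ is infinite torsion we are in case (iv) directly. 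Finally, a case (iv) conclusion for $\bar G$ lifts to case (iv) for $G$ whenever the relevant kernel lifts to genuine infinite torsion in $G$.

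The hard part is precisely the torsion bookkeeping in the last two pull-backs. When $A$ carries both an infinite (divisible) torsion part and a non-trivial torsion-free part, the group $V=H\cap A$, and more generally the kernel appearing in a case~(iv) conclusion for $\bar G$, need be neither finite, nor torsion, nor torsion-free; one must decide whether torsion that is visible in $G/A$ lifts to genuine infinite torsion inside a finitely generated subgroup of $G$ (yielding case (iv)) or is merely an artifact of the quotient that is absorbed into cases (i)--(iii). Controlling this lifting — separating the divisible torsion, which is invisible to finitely generated subgroups, from the reduced torsion, which on pain of producing a $C_p\wr\Z$ must be finite — is the crux of the argument, and is exactly the point at which the conclusion cannot be sharpened beyond the rather weak case (iv).
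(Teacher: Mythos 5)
Your skeleton is the same as the paper's: induct on derived length, use the inductive hypothesis on finitely generated subgroups of $G'$ to assume $G'$ is locally virtually abelian, quotient by the last non-trivial term of the derived series (your $A$, the paper's $N$), and pull each of the four conclusions for the quotient back through the abelian kernel. Your treatment of cases (i) and (ii) for $G/A$ is correct and essentially identical to the paper's. But the proof is incomplete in exactly the two places you yourself flag as ``the crux''. First, in your case (iii) the trichotomy on $V=H\cap A$ --- finite, torsion-free, or infinite torsion --- is not exhaustive: $V$ is just a subgroup of the abelian group $A$ and can perfectly well be infinite with both non-trivial torsion and non-trivial torsion-free parts (e.g.\ $\Z\oplus C_2$), and your argument says nothing about this mixed case. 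The paper avoids it by factoring out torsion \emph{first}: having reduced to the case where $G/N$ is a proper Gc-group, it observes that $G/T$ (with $T$ the torsion subgroup of $N$) is torsion-free and not virtually abelian, applies Theorem~\ref{tfsol} to $G/T$, and only then splits on whether the purely torsion kernel $T$ is finite (Lemma~\ref{GBSquot} gives (iii)) or infinite (giving (iv)). The same repair works for your $H$ and $V$ --- quotient by the torsion subgroup of $V$ before invoking Theorem~\ref{tfsol} --- but it has to be made.

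Second, and more seriously, you do not prove the case where (iv) holds for $G/A$ at all; you only assert that it ``lifts whenever the relevant kernel lifts to genuine infinite torsion,'' and your proposed mechanism --- a divisible/reduced decomposition of $T$ in which the divisible part is invisible to finitely generated subgroups and the reduced part is forced to be finite --- is neither proved nor sufficient: even granting those claims, they do not produce the finitely generated subgroup $H$ with infinite normal torsion subgroup that (iv) demands. This case is the bulk of the paper's proof and needs genuine machinery: with $U/N$ the infinite normal torsion subgroup of $G/N$ and $G/U$ free abelian or a proper Gc-group, the paper writes $U=\bigcup_{i\in\N} U_i$ with $U_i$ finitely generated, sets $N_i=U_i\cap N$, and uses Minkowski's bound (Proposition~\ref{GLbound}) together with a stabilisation argument to construct a normal subgroup $C$ with $T<C\le U$, ${|U:C|}$ finite and $C/T=C_{U/T}(N/T)$; after replacing $G$ by a finite-index subgroup one may take $U=C$, whereupon Schur's theorem (Proposition~\ref{schur}) shows that $U'T$ is a torsion group; letting $V/U'T$ be the torsion subgroup of $U/U'T$, the quotient $G/V$ is torsion-free, Theorem~\ref{tfsol} applies to it, and each of its outcomes yields (ii), (iii) or (iv) for $G$ according to whether $V$ is finite or infinite. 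None of this centraliser/Schur argument appears in your proposal, so what you have is a correct reduction of the theorem to its two hardest sub-cases, not a proof of it.
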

\begin{proof}
The proof is by induction on the derived length of $G$. We have already proved
it for groups of derived length at most two and when $G$ is torsion-free
(Theorems \ref{metabelian} and \ref{tfsol}).
Suppose the result holds for groups of derived length at most $n \ge 2$, and
let $G$ have derived length $n+1$. So the result holds for $G/N$,
where $N=G^{(n)}$, and we may assume that $G'$ is locally virtually abelian. Let $T$ be the torsion subgroup of $N$.
For the remainder of the proof, we shall only use the fact that $N$ is abelian,
and not that $N=G^{(n)}$. This enables us to replace $G$ by a subgroup when
convenient.

If (i) holds for $G/N$, then $G$ is virtually of derived length $2$, and the
result follows from Theorem~\ref{metabelian}.  

Suppose that (ii) holds for $G/N$; that is, $(G/N)'$ contains a subgroup
isomorphic to $\Z^\infty$.  
Applying Lemma \ref{(G/N)'}, let $K$ be a subgroup of $G'$ such that $K/(N\cap K)\cong \Z^\infty$.
Then $K/(T\cap K)$ is torsion-free and so, by Lemma \ref{Zinf-tf}, there is also a $\Z^\infty$ subgroup in $K/(T\cap K)$.
By Lemma \ref{Z^inf}, this implies that $K$ (and hence $G'$) has a subgroup isomorphic to $\Z^\infty$
and so (ii) holds for $G$.

Suppose that (iii) holds for $G/N$.
Then, by replacing $G$ by a subgroup, we may assume that $G/N$ is isomorphic
to a proper Gc-group.
So $G/T$ is torsion-free and not virtually abelian,
and hence by Theorem~\ref{tfsol} one of (ii), (iii) holds for $G/T$.
If (ii) holds for $G/T$, then (ii) holds for $G$ by Lemma \ref{Z^inf}.
So suppose that (iii) holds for $G/T$. By passing to a subgroup we may
assume that $G/T$ is a proper Gc-group.
If $T$ is finite, then the result follows from Lemma~\ref{GBSquot},
whereas if $T$ is infinite, then $G$ satisfies (iv).

Finally suppose that $G/N$ satisfies (iv). We may assume that $G/N$ has an
infinite normal torsion subgroup $U/N$, where $G/U$ is either torsion-free
abelian or a proper Gc-group.
If $N=T$ then $G$ satisfies (iv), so suppose not.
Since $N$ is abelian, if $N/T$ contains subgroups isomorphic to $\Z^k$
for all $k \in \N$, then $N$ contains $\Z^\infty$, so we may assume that there is a
largest $k>0$ such that $\Z^k \le N/T$.

Suppose that $U$ is generated by elements $g_i$ ($i \in \N$) and,
for $i \in \N$, define $U_i = \langle g_1,\ldots,g_i \rangle$ and
$N_i = U_i \cap N$.
A finitely generated soluble torsion group is finite, so the groups
$U_i/N_i \cong U_iN/N$ are all finite with $\bigcup_{i\in \N} U_iN/N=U/N$.
The groups $N_iT/T$ are all finitely generated free abelian groups
with $\bigcup_{i\in \N} N_iT/T=N/T$ and so for all sufficiently
large $i$ we have $N_iT/T \cong \Z^k$.
For $i \in \N$, define the subgroup $C_i$ by $U_i \cap T \le C_i \le U_i$ and $C_iT/T = C_{U_iT/T}(N_iT/T)$.
By Proposition \ref{GLbound}, there is a constant $L$ such that ${|U_i:C_i|} \le L$
for all $i$. Since $N_i \le N_{i+1}$ for each $i$, we have
$C_{i+1} \cap U_i \le C_i$, and hence ${|U_i:C_i|} \le {|U_{i+1}:C_{i+1}|}$.
So, for sufficiently large $i$, we have $N_iT/T \cong \Z^k$ and
${|U_i:C_i|} = L$ for some constant $L$, and so we must also have
$C_{i+1} \cap U_i = C_i$. So $C := \bigcup_{i\in \N} C_i$
satisfies $T < C$ and $C/T = C_{U/T}(N/T)$, and ${|U:C|}=L$.

Now $C \lhd G$. If $G/U$ is torsion-free abelian, then $G/C$ is virtually
abelian, whereas if $G/U$ is a proper Gc-group, then by Lemma~\ref{GBSquot}
$G/C$ is virtually a proper Gc-group.
In either case, we can replace $G$ by a finite index subgroup
containing $C$ such that $G \cap U = C$, and thereby assume that $U=C$
and hence $U_i=C_i$ for all $i$.
So $N_iT/T \le Z(U_iT/T)$ for each $i$, and by Proposition~\ref{schur} $U_i'T/T$ is finite.
Since $U' = \bigcup_{i\in \N} U_i'$, $U'T/T$ and hence also $U'T$
is a torsion group.
Let $V/U'T$ be the torsion subgroup of the abelian group $U/U'T$.
So $U/V$ and hence also $G/V$ is torsion-free, and hence
by Theorem~\ref{tfsol} one of (i), (ii), (iii) holds for $G/V$.

If $G/V$ is virtually abelian then, since $G$ is not virtually
abelian, $V$ must be infinite, and so $G$ satisfies (iv).
If $G/V$ satisfies (ii) then so does $G$ by Lemmas~\ref{(G/N)'} and~\ref{Z^inf}.
If (iii) holds for $G/V$ then, by replacing $G$ by a subgroup, we may assume that $G/V$ is isomorphic to a proper Gc-group.
Then the result follows by Lemma~\ref{GBSquot}
if $V$ is finite, and $G$ satisfies (iv) if $V$ is infinite.
\end{proof}

\section*{Acknowledgement}

The first author was supported by a Vice-Chancellor's Scholarship from the University of Warwick.


\begin{thebibliography}{99}

\bibitem{TB}
{\sc T. Brough}.
\newblock \emph{Groups with poly-context-free word problem}.
\newblock PhD thesis, University of Warwick, 2010.

\bibitem{HRRT}
{\sc D.F.\ Holt, S.\ Rees, C.E.\ R\"over} and {\sc R.M.\ Thomas}.
\newblock `Groups with context-free co-word problem'.
\newblock \textit{J. London Math. Soc.} (2) 71 (2005) 643--657.

\bibitem{Rob04}
{\sc J.C.\ Lennox} and {\sc D.J.S. Robinson}.
\newblock \textit{The Theory of Infinite Soluble Groups}.
\newblock Clarendon Press, Oxford 2004.

\bibitem{Rob82}
{\sc D.J.S.\ Robinson}.
\newblock \textit{A course in the theory of groups}.
\newblock (Springer-Verlag, New York 1982).

\bibitem{RocTan}
{\sc D.N.\ Rockmore} and {\sc K.S.\ Tan}.
\newblock `A note on the order of finite subgroups of $\GL(n,\Z)$'.
\newblock \textit{Arch. Math.}, Vol. 64 (1995), 283--288.

\bibitem{Rotman}
{\sc J.J.\ Rotman.}
\newblock \textit{Advanced Modern Algebra}.
\newblock Prentice Hall 2002.


\end{thebibliography}
\end{document}